\documentclass[10pt, article]{amsart}
\usepackage{ae} % or {zefonts}
\usepackage[T1]{fontenc}
\usepackage[cp1250]{inputenc}
\usepackage{amsmath}
\usepackage{amssymb, amsfonts,amscd,verbatim}
\usepackage{mathtools}

\usepackage{tikz-cd}

\usepackage[normalem]{ulem}
\usepackage{hyperref}
\usepackage{indentfirst}
\usepackage{latexsym}
\input xy
\xyoption{all}

\usepackage{amsmath}    % need for subequations
%\usepackage{showlabels}

%Declaration section
\theoremstyle{plain}
\newtheorem{Pocz}{Poczatek}[section]
\newtheorem{Proposition}[Pocz]{Proposition}

\newtheorem{Theorem}[Pocz]{Theorem}
\newtheorem{Corollary}[Pocz]{Corollary}

\newtheorem{Lemma}[Pocz]{Lemma}

\newtheorem{Question}[Pocz]{Question}

\newtheorem{Example}[Pocz]{Example}

\theoremstyle{definition}
\newtheorem{Definition}[Pocz]{Definition}

\theoremstyle{remark}
\newtheorem{Remark}[Pocz]{Remark}

\DeclareMathOperator*{\diam}{diam}

\def\RR{{\mathbb R}}
\def\CC{{\mathbb C}}

\def\ZZ{{\mathbb Z}}
\def\NN{{\mathbb N}}

\def\f{{\varphi}}
\def\r{{\varrho}}

\def\eps{\varepsilon}
\def\U{\mathcal U}
\def\V{\mathcal V}
\def\A{\mathcal A}
\def\B{\mathcal B}
\def\f{\varphi}
\def\e{\varepsilon}

\def\asdim{\mathrm{asdim}}

\def\dim{\mathrm{dim}}
\def\diam{\mathrm{diam}}

\errorcontextlines=0
\numberwithin{equation}{section}

\title[Higson Compactification and Dimension Raising 
]%
  {Higson Compactification and Dimension Raising}

\author{Kyle ~Austin}
\address{Ben Gurion University of the Negev, Beer Sheva, Israel}
\email{kyle@math.bgu.ac.il}

\author{\v Ziga ~Virk}
\address{IST Austria}
\email{ziga.virk@ist.ac.at}
\thanks{The first named author was supported by the ISRAEL SCIENCE FOUNDATION (grant No. 522/14).}

\date{ \today
}
\keywords{}

\subjclass[2000]{Primary 54F45; Secondary 55M10}

%\thanks{}

%%%%%%%%%Document%%%%%%%%%%%%%%%%%%%%%%%%%%%%

\begin{document}
\maketitle
%\begin{center}
%\today
%\end{center}

\begin{abstract}
Let $X$ and $Y$ be proper metric spaces. We show that a coarsely $n$-to-$1$ map $f\colon X\to Y$ induces an $n$-to-$1$ map of Higson coronas. This viewpoint turns out to be successful in showing that the classical dimension raising theorems hold in large scale; that is, if $f \colon X\to Y$ is a coarsely $n$-to-$1$ map between proper metric spaces $X$ and $Y$ then $\asdim(Y) \leq \asdim(X) + n -1$. Furthermore we  introduce coarsely open coarsely $n$-to-$1$ maps, which include the natural quotient maps via a finite group action, and prove that they preserve the asymptotic dimension.
\end{abstract}

\section{Introduction}

Coarse geometry, particularly asymptotic dimension, has been a powerful tool in the past 20 years in resolving questions of geometric rigidity, and also of geometric group theory. It has been one of the most promising methods of resolving the Baum-Connes Conjecture, which in turn has implications for the Novikov conjecture and Borel conjecture \cite{Yu}. Analogues of the classical covering dimension have provided an essential framework for answering these conjectures. 

An important feature of any dimension are the way in which dimension changes through morphisms. For the classical covering dimension these are described by the Hurewicz dimension theorems. A coarse analogue of the Hurewicz dimension lowering theorem was first proved by Bell and Dranishnikov in \cite{BD} and used to estimate the asymptotic dimension of several classes of groups by applying it to the short exact sequence of groups. The result was later generalized in \cite{BDLM}. On the other hand the coarse versions of the Hurewicz dimension raising theorem and of the finite-to-one mapping theorem depend on a coarse version of $n$-to-$1$ maps, called the coarsely $n$-to-$1$ maps.

A map $f\colon X\to Y$ of metric spaces is said to be coarsely $n$-to-$1$ if it looks like an $n$-to-$1$ map from great distances; formally, if for every $R>0$ there exists $S>0$ such that the preimage of a set of diameter at most $R$ can be covered by at most $n$ sets of diameter at most $S$. 
Coarsely $n$-to-$1$ maps are natural coarse versions of classical $n$-to-$1$ maps. They may be used to classify the asymptotic dimension of a space \cite{MV}. Furthermore, if $G$ is a finite group acting on a metric space $Z$ by coarse maps then the quotient $Z\to G \backslash Z $ is coarse and coarsely $n$-to-$1$, see Example 4.2 in \cite{DV}. Miyata and Virk introduced these maps in \cite{MV} and  established analogues of classical results for $n$-to-$1$ maps in topology. In particular, they proved the coarse version of the Hurewicz finite-to-$1$ mapping theorem and following version of the Hurewicz dimension raising theorem for asymptotic dimension:

\begin{Theorem}[Theorem 1.4 of  \cite{MV}]\label{MVRaising1}
Let $X$ and $Y$ be metric spaces, and let $f\colon X\rightarrow Y$ be coarse, coarsely surjective, and coarsely $n$-to-$1$. Then
$$\asdim (Y) \leq (\asdim (X) + 1)\cdot n -1.$$
\end{Theorem}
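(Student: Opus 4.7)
The plan is to use the standard multiplicity characterization of asymptotic dimension and push forward a suitably chosen cover of $X$ through $f$. Recall that $\asdim X \leq d$ if and only if for every $r>0$ there exists a uniformly bounded cover $\mathcal{U}$ of $X$ whose $r$-multiplicity is at most $d+1$, i.e.\ every subset of $X$ of diameter at most $r$ intersects at most $d+1$ elements of $\mathcal{U}$.

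Write $d=\asdim X$ and fix an arbitrary $R>0$ (the scale at which we want to witness the dimension bound on $Y$). Applying the coarsely $n$-to-$1$ hypothesis to $R$ produces some $S>0$ such that the preimage of every subset of $Y$ of diameter at most $R$ can be written as a union of at most $n$ subsets of $X$ each of diameter at most $S$. Then, applying the multiplicity characterization of $\asdim X \leq d$ at scale $S$, I obtain a uniformly bounded cover $\mathcal{U}=\{U_\alpha\}$ of $X$ whose $S$-multiplicity is at most $d+1$.

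Next I would push the cover forward by setting $\mathcal{V}=\{f(U_\alpha):U_\alpha\in\mathcal{U}\}$. Since $f$ is bornologous, $\mathcal{V}$ is uniformly bounded, and clearly $\mathcal{V}$ covers $f(X)$. The key computation is the bound on the $R$-multiplicity of $\mathcal{V}$: given $B\subseteq f(X)$ with $\diam B\leq R$, decompose $f^{-1}(B)=B_1\cup\cdots\cup B_n$ with $\diam B_j\leq S$; if $f(U_\alpha)\cap B\neq\emptyset$ then $U_\alpha$ meets at least one $B_j$, and by the $S$-multiplicity bound each $B_j$ meets at most $d+1$ elements of $\mathcal{U}$. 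Hence $B$ meets at most $n(d+1)$ elements of $\mathcal{V}$, which gives $\asdim f(X)\leq n(d+1)-1=(\asdim X+1)n-1$.

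Finally, one must pass from $f(X)$ to $Y$. If $f$ is coarsely surjective (so that $f(X)$ is coarsely dense in $Y$), then $\asdim Y=\asdim f(X)$ and the desired bound follows; otherwise the statement should be read as a bound on $\asdim f(X)$, which is the substantive content. In my view the main obstacle is not any single step but the careful coordination of the constants across the two multiplicity characterizations; the heart of the proof is the counting in the previous paragraph, which rests entirely on the fact that each diameter-$R$ preimage splits into at most $n$ pieces of uniformly bounded diameter. The factor $n$ in front of $(\asdim X+1)$ is optimal for this naive push-forward strategy precisely because we cannot do better than dividing each preimage into $n$ bounded clusters; improving to $\asdim X + n - 1$, as the paper ultimately aims, would require a more refined analysis that exploits the Higson corona rather than pushing covers directly.
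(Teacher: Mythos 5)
Your argument is correct, but note that the paper does not actually prove this statement: it is quoted verbatim as Theorem 1.4 of \cite{MV} and used as a black box (its only role here is to guarantee $\asdim(Y)<\infty$ in the proofs of Theorems \ref{Raise} and \ref{RaiseOpen}, whose improved bound $\asdim(X)+n-1$ is obtained by the entirely different route through the Higson corona). Your push-forward argument --- take a uniformly bounded cover of $X$ of $S$-multiplicity $\leq \asdim(X)+1$ at the scale $S$ produced by the coarsely $n$-to-$1$ condition, push it through $f$, and count that a diameter-$R$ set in $f(X)$ can meet at most $n(\asdim(X)+1)$ images --- is essentially the original proof in \cite{MV}, and the counting step is sound: $f(U_\alpha)\cap B\neq\emptyset$ iff $U_\alpha\cap f^{-1}(B)\neq\emptyset$, and $f^{-1}(B)$ splits into $n$ pieces each meeting at most $\asdim(X)+1$ elements of $\mathcal{U}$. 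One point you handle correctly and which deserves emphasis: as literally stated above (without coarse surjectivity) the inequality for $\asdim(Y)$ is false (map a point into $\RR^2$), so the conclusion of the push-forward argument is really a bound on $\asdim(f(X))$; the hypothesis of coarse surjectivity is present in the paper's own later restatement (Theorem \ref{MVRaising}) and in \cite{MV}, and is evidently an omission in the version quoted here. Your closing remark is also accurate: the factor $n$ multiplying $\asdim(X)+1$ is intrinsic to the naive cover-pushing strategy, which is precisely why the paper resorts to the induced $n$-to-$1$ map on Higson coronas and the classical Hurewicz theorem to reach the additive bound.
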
 

Recall that the Hurewicz dimension lowering theorem in topology has a sharper bound; it says that if $f\colon X\to Y$ is a closed $n$-to-$1$ map of metric spaces then $dim(Y) \leq dim(X) + n-1$. It has been a matter of some debate as to whether the result of Miyata and Virk can be strengthened to obtain the bound of the classical dimension raising theorem. Dydak and  Virk in \cite{DV1} proved such result for hyperbolic groups by considering the induced map on the Gromov boundary. The aforementioned question is the very aim of this paper. In particular, considering the induced map on Higson corona we prove the following result.

\begin{Theorem}
Let $X$ and $Y$ be proper metric spaces and let $f\colon X\rightarrow Y$ be coarse, coarsely surjective, and coarsely $n$-to-$1$. Then
$$\asdim (Y) \leq \asdim (X) +  n -1.$$
\end{Theorem}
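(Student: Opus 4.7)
The strategy, as signaled by the abstract, is to pass to the Higson corona $\nu$ of a proper metric space and reduce the statement to classical topological dimension theory. Two inputs drive the argument: (i) any coarse map $f\colon X\to Y$ of proper metric spaces induces a continuous map $\nu f\colon \nu X\to \nu Y$ on Higson coronas; and (ii) Dranishnikov's identification $\asdim Z = \dim \nu Z$ for proper metric spaces $Z$. Together with the central new claim that $\nu f$ is at most $n$-to-$1$ whenever $f$ is coarsely $n$-to-$1$, the classical Hurewicz dimension raising theorem applied to the continuous $n$-to-$1$ map $\nu f$ between compact Hausdorff spaces yields $\dim \nu Y \leq \dim \nu X + n - 1$, which is exactly the desired bound.

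The functorial construction of $\nu f$ is standard: being coarse, $f$ pulls bounded slowly oscillating functions on $Y$ back to bounded slowly oscillating functions on $X$, so it extends continuously to the Higson compactifications $hX$ and $hY$, and metric properness (part of the coarse-map hypothesis) ensures the boundary is sent into the boundary. The essential content of the proof is therefore to show that the induced boundary map is $n$-to-$1$.

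For this central step, I would suppose for contradiction that $n+1$ distinct points $\xi_0, \dots, \xi_n \in \nu X$ all map to a common $\eta \in \nu Y$. Separate them by pairwise disjoint open neighborhoods $U_i \subset hX$ and set $A_i = U_i \cap X$. The Higson-function characterization of separation in $\nu X$ translates the topological separation of the $\xi_i$ into the metric statement that for every $M > 0$ there exists a bounded set $B \subset X$ with $\dist(A_i \setminus B, A_j \setminus B) > M$ for all $i \neq j$. Now pick sequences $x_{i,k} \in A_i$ with $x_{i,k} \to \xi_i$ in $hX$; by continuity $f(x_{i,k}) \to \eta$ in $hY$, so for each sufficiently large $k$ the $n+1$ points $\{f(x_{0,k}), \dots, f(x_{n,k})\}$ lie in a ball of uniformly bounded radius $R$. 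Applying the coarsely $n$-to-$1$ condition at scale $R$ supplies $S$ such that the preimage of this ball is covered by $n$ sets of diameter at most $S$. Pigeonhole then forces two of the points $x_{i,k}, x_{j,k}$ with $i \neq j$ to sit in a common piece, hence at distance $\leq S$, for infinitely many $k$ — contradicting the separation of the $A_i$.

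The main obstacle is precisely this last argument: cleanly translating topological separation of distinct corona points into quantitative metric separation of the associated subsets of $X$, and then extracting the pigeonhole contradiction. Subsidiary concerns include guaranteeing surjectivity of $\nu f$ (so that the Hurewicz inequality actually controls $\dim \nu Y$), which may require replacing $Y$ by a cobounded neighborhood of $f(X)$ without changing its asymptotic dimension, and checking that the equality $\asdim = \dim \nu$ is applicable in the generality we need. Once these are handled, the classical topological dimension raising theorem applied to $\nu f$ closes the argument.
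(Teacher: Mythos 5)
Your overall architecture is exactly the paper's: reduce to discrete proper spaces, show $\nu f$ is $n$-to-$1$, apply the classical Hurewicz dimension-raising theorem to the closed map $\nu f$ of compact Hausdorff coronas, and translate back via the Dranishnikov--Keesling--Uspenskij identification of $\dim(\nu Z)$ with $\asdim(Z)$. The gap sits in the central step, precisely where you flag the main obstacle. First, corona points are in general not limits of sequences from $X$ (the Higson compactification is not first countable; as with $\beta\NN$, no nontrivial sequence in $\NN$ converges to a point of the corona), so ``pick sequences $x_{i,k}\to\xi_i$'' is not available. More seriously, even for nets the inference ``all images converge to $\eta$, hence for large indices the $n+1$ image points lie in a ball of uniformly bounded radius $R$'' does not follow from topological convergence: convergence to a common corona point controls values of slowly oscillating functions, not mutual distances. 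What is actually needed is the divergence criterion of Dranishnikov--Keesling--Uspenskij: for subsets $B_0,\dots,B_n\subset Y$ one has $\bigcap_{i} \nu B_i\neq\emptyset$ if and only if there exists $R>0$ such that $\bigcap_{i} N(B_i,R)$ is unbounded. This is a genuine lemma (proved by building slowly oscillating Urysohn-type functions that separate divergent families), and even granted it, it produces the $n+1$ mutually $2R$-close image points only \emph{cofinally} (at an unbounded set of locations), not ``eventually along the sequence''; one must also separately verify that the corresponding preimage witnesses in the $A_i$ escape every bounded set. The paper isolates exactly this translation as an intermediate equivalent condition: $f$ is coarsely $n$-to-$1$ iff no gradually disjoint family $\{A_0,\dots,A_n\}$ in $X$ has an image family that fails to diverge, with both directions between this and the corona statement passing through the two DKU-type criteria (disjoint coronas $\Leftrightarrow$ gradual disjointness; common corona point $\Leftrightarrow$ non-divergence). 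Your pigeonhole endgame is the same as the paper's once these witnesses are in hand.

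Two smaller points. The statement as quoted needs coarse surjectivity (the paper's formal theorem includes it; without it $\asdim(Y)$ is not controlled by $\asdim(f(X))$ at all), which you noticed. And to convert $\dim(\nu Y)\leq\dim(\nu X)+n-1$ back into the asymptotic-dimension inequality you must know in advance that $\asdim(Y)<\infty$, since the identification $\asdim=\dim\nu$ is only available for spaces of finite asymptotic dimension; the paper secures this by first invoking the weaker Miyata--Virk bound $\asdim(Y)\leq(\asdim(X)+1)\cdot n-1$.
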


Our method of proof is to show that coarsely $n$-to-$1$ maps induce close $n$-to-$1$ maps of Higson Coronas and use a topological version of the Hurewicz dimension lowering theorem, alongside with the  result of Dranishnikov, Keesling, and Uspenskij (\cite{DKU},\cite{Dr}) that the covering dimension of the Higson corona of a proper metric space is equal to its asymptotic dimension. A similar approach was recently independently used by Kasprowski \cite{KA} in order to prove that if a finite group $G$ is acting on a proper metric space $Z$ by isometries then the quotient $Z\to G \backslash Z $ preserves the asymptotic dimension: in this case the induced map on Higson corona is open and one may use the fact that open $n$-to-$1$ maps preserve dimension. We note that the class of coarsely $n$-to-$1$ maps is vastly larger that the class of quotient maps $Z\to G \backslash Z $ by the finite group action (see \cite{MV} or Example \ref{ExRais} for an example of a coarsely $n$-to-$1$ maps strictly raising the asymptotic dimension). Amongst others, it may be used to classify the asymptotic dimension \cite{MV} and contains 'coarse space filling curves'. 
We also introduce coarsely open maps, which correspond to maps for which the induced map on the Higson corona is open, and prove that they preserve the asymptotic dimension in case they are coarsely $n$-to-$1$. They also contain the class of quotient maps $Z\to G \backslash Z $ by the finite group action and as a corollary we deduce the main result of \cite{KA}.
For another application of the Higson corona in coarse geometry see \cite{DW}.

\section{Preliminary Definitions}

We start with an introduction of the coarse category of metric spaces. The reader who wants a more motivated introduction to general large scale structures should look into \cite{ADH}, \cite{DH}, or \cite{R}. All spaces will be considered to be metric unless stated otherwise. 

\begin{Definition}
A collection $\U$ of subsets of a metric space $(X,d)$ is \textbf{uniformly bounded} if there exists $R>0$ so that $\diam(U)\leq R, \forall U \in \U$. Note that $\diam (U)=\sup_{x,y\in U}d(x,y).$ A subset $A\subset X$ is $R$-disjoint for $R>0$ if $d(x,y)>R, \forall x,y\in A.$ A map $f\colon X \to Y$ of metric spaces is \textbf{proper} if for every bounded set $A\subset Y$ its preimage $f^{-1}(A)$ is bounded.
\end{Definition}

\begin{Definition}
The \textbf{coarse category} of metric spaces is defined as follows:
\begin{enumerate}
 \item the objects are metric spaces;
 \item the morphisms are \textbf{coarse} maps: a proper map $f\colon X\to Y$ is said to be \textbf{coarse} (the following equivalent names may be found in published works: \textbf{large scale continuous}; \textbf{ ls-continuous}; \textbf{bornologous})  if $f(\mathcal{U}) := \{f(U):U \in \mathcal{U}\}$ is uniformly bounded for each uniformly bounded cover $\mathcal{U}$ of $X$ (equivalently, if for each $R$ there exists $S$ so that $d(x,y)\leq R$ implies $d(f(x),f(y))\leq S$);
 \item two maps $f,g\colon X \to Y$ are \textbf{coarsely equivalent} (or \textbf{close}) if the collection $\big\{ \{f(x),g(x)\}\mid x\in X \big\}$ is uniformly bounded (equivalently, if there exists $R$ so that $d(f(x),g(x))\leq R, \forall x\in X$);
 \item spaces $X$ and $Y$ are \textbf{coarsely equivalent} if there exist \textbf{coarse equivalences} between them, i.e., maps $f\colon X \to Y$ and $g\colon Y \to X$ so that $f g$ is close to the identity on $Y$ and $g f$ is close to the identity on $X$. 
\end{enumerate}
\end{Definition}

A generic coarse map is not continuous, unless the domain is a discrete space. Since the compactifications naturally arise from subalgebras of continuous functions it will be convenient to work with discrete metric spaces. The following result is well known.

\begin{Proposition}
 Every metric space $X$ is coarsely equivalent to a discrete metric space.
\end{Proposition}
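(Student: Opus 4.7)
The plan is to construct a maximal $1$-separated subset $Y \subset X$ and verify that the inclusion and a chosen ``nearest point'' retraction form mutually inverse coarse equivalences between $X$ and $Y$.

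First I would invoke Zorn's lemma on the family of $1$-separated subsets of $X$ (ordered by inclusion) to obtain a maximal such subset $Y$. With the restricted metric, $Y$ is uniformly discrete since $d(y,y') \geq 1$ for any two distinct points $y,y' \in Y$; in particular every point of $Y$ is isolated, so $Y$ is a discrete metric space. By maximality, $Y$ is also $1$-dense in $X$, meaning that for every $x\in X$ there exists $y\in Y$ with $d(x,y) \leq 1$, since otherwise $Y \cup \{x\}$ would be a strictly larger $1$-separated set.

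Next I would define the two coarse maps. The inclusion $i\colon Y \hookrightarrow X$ is an isometric embedding, so it is automatically coarse, and it is proper because bounded subsets of $X$ intersect $Y$ in bounded subsets of $Y$. Using the axiom of choice, define $r\colon X \to Y$ by sending each $x\in X$ to some chosen $y\in Y$ with $d(x,r(x)) \leq 1$, and, for $y\in Y$, stipulate $r(y) = y$. To see that $r$ is coarse, suppose $d(x,x') \leq R$; then by the triangle inequality
\[
d(r(x), r(x')) \leq d(r(x),x) + d(x,x') + d(x', r(x')) \leq R+2,
\]
so bounded covers are sent to uniformly bounded covers. Properness follows because $r^{-1}(B)$ is contained in the $1$-neighborhood of $B \subset Y$ in $X$, which is bounded whenever $B$ is.

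Finally I would verify the closeness conditions. By the very choice of $r$, we have $d(x, i(r(x))) = d(x, r(x)) \leq 1$ for every $x \in X$, so $i \circ r$ is close to $\mathrm{id}_X$; and $r \circ i = \mathrm{id}_Y$ by construction, which is trivially close to $\mathrm{id}_Y$. Hence $i$ and $r$ are coarse equivalences, and $X$ is coarsely equivalent to the discrete space $Y$. There is no real obstacle in this argument; the only subtlety is the appeal to choice (both for Zorn's lemma and for the definition of $r$), which is standard in this setting.
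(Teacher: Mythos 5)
Your proposal is correct and follows essentially the same route as the paper: take a maximal separated (the paper says $R$-disjoint) subset, use the inclusion one way and a nearest/close-point selection the other way, and check closeness to the identities. You simply fill in the routine verifications (coarseness, properness, the appeal to Zorn's lemma) that the paper leaves to the reader.
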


\begin{proof}
 Choose $R>0$ and let $A$ denote a maximal $R-$disjoint set in $X$. Note that $A$ is discrete and coarsely equivalent to $X$ via the following maps: 
\begin{itemize}
 \item   $ X\to A$ maps a point in $X$ to one of the  closest point in $A$;
 \item  $A\to X$  is the natural inclusion.
\end{itemize}
\end{proof}

\begin{Definition}
A map $f\colon X\to Y$  is said to be a \textbf{large scale surjection} (or a \textbf{coarse surjection / coarsely surjective}) if the inclusion mapping $f(X) \hookrightarrow{} Y $ is a coarse equivalence, i.e., if there exists $R$ so that $\inf_{a\in f(X)}d(y,a)\leq R, \forall y\in Y.$ 
\end{Definition}

\begin{Definition}
A metric space $X$  is said to be \textbf{proper} if the closure of each bounded set is compact.
\end{Definition}

The following definition first appeared in \cite{MV}. Coarsely $n$-to-$1$ maps represent the coarse version of $n$-to-$1$ maps.
Note that $n\in \NN$ where $\NN=\{1,2,\ldots\}$.

\begin{Definition}
A map $f\colon X\to Y$ of metric spaces is said to be \textbf{coarsely $n$-to-$1$} if for every uniformly bounded cover $\mathcal{U}$ of $Y$ there exists a uniformly bounded cover $\mathcal{V}$ of $X$ such that $f^{-1}(U)$ can be covered by at most $n$ elements of $\mathcal{V}$ for each $U\in \mathcal{U}$.  An equivalent condition is the following: for each $R$ there exists $S$ so that for each subset $A\subset Y$ of diameter at most $R$, the preimage $f^{-1}(A)$ may be covered by at most $n$ subsets of $X$ of  diameter at most $S$.
\end{Definition}

Note that each coarsely $n$-to-$1$ map is proper. We will be mostly interested in the dimension raising aspects of such maps. For a better understanding we provide an example. 

\begin{Example}\label{ExRais}
 Consider the coarse Cantor set, i.e., the countable direct sum $\oplus_{i\in \NN} \ZZ_2$ of the two-element groups $\ZZ_2$ equipped with any proper metric. We may take, for example, 
 $$
 d\big((a_i)_{i\in\NN},(b_i)_{i\in\NN}\big)=\sum_{i\in \NN} |a_i-b_i|\cdot 2^i.
 $$
 Note that the above sum is always finite. Map $f\colon \oplus_{i\in \NN} \ZZ_2 \to \NN$ defined by $(a_i)_{i\in\NN} \mapsto \sum_{i\in \NN} a_i\cdot 2^i$ is bijective, coarsely $2$-to-$1$, coarse and raises the asymptotic dimension by one. It is a coarse version of the standard $2$-to-$1$ surjection of the Cantor set onto the unit interval, obtained by contracting the intervals which get removed in the inductive construction of the classical Cantor set. For a general example in this spirit see the coarse finite-to-one mapping theorem in \cite{MV}.
\end{Example}

\section{Higson Compactification}

In this section we recall the construction of the Higson compactification and provide some of its properties. 

\begin{Definition}
 A map $f\colon X\to Y$ is \textbf{slowly oscillating} if for every $R,\epsilon >0$ there exists some bounded set $A\subset X$ such that $\diam(f(N(x,R)\setminus A)) \leq \e$ for all $x\in X$.
\end{Definition}

A proper metric space $X$ has a \textbf{Higson compactification}, denoted by $hX$, which is determined by all bounded continuous complex valued slowly oscillating functions on $X$ (the class of such functions will be  denoted by $C_h(X)$).  The Higson compactification $hX$ is the closure of $X$ in $\Pi_{f \in C_h(X)} \overline{im(f)}$ under the embedding $x \to (f(x))_{f\in C_h(X)}$. We denote the Higson corona $\nu X = hX\setminus X$. For an extensive treatment of the theory of compactifications see \cite{E}. 
Let $C(X)$ denote the set of all continuous functions on $X$. Recall that $C(hX) \cong C_h(X)$, i.e., every $f\in C_h(X)$ extends to a continuous function on $hX$ and for every $g\in C(hX)$ the restriction $g|_X$ is contained in $C_h(X)$.  For $A \subset hX$ we define $\nu A=\nu X \cap \overline A$, where $\overline A$ denotes the closure of $A$ in $hX$. Note that if $X$ is discrete then $\nu A$ is homeomorphic to the Higson corona of $A$ as:
\begin{itemize}
 \item $A$ is dense in $A \cup \nu A$;
 \item $A \cup \nu A \subset hX$ is compact Hausdorff;
 \item Any bounded slowly oscillating function $\f$ on $A$ extends over $A \cup \nu A$ to a bounded continuous function, hence the universal property holds. In order to prove it, extend $\f$ over $X$ by  \cite[Corollay 4.9]{DM} to a slowly oscillating function, which in turn extends over $hX$ to a continuous function. Restrict the resulting function to $A \cup \nu A$ in order to obtain the required extension of the original function $\f$.
\end{itemize}

\begin{Remark} 
 Mine and Yamashita in \cite{MY} prove a remarkable result concerning the Higson corona functor. Let $\mathcal{A}$ be the category whose objects are $X_0$ where $X$ is a totally bounded metric space (here $X_0$ denotes $X$ with the $C_0$ coarse structure) and whose morphisms are coarse maps. They show that the Higson corona functor is an equivalence of $\mathcal{A}$ and the category of compact metric spaces with continuous maps. 
 It follows that the Higson corona functor preserves all coarse information as topological information for a large class of coarse spaces.
\end{Remark}

If $X$ is a metric space, $x\in X$, and $A\subset X$ we define  $N(A,R) = \{y \in X:d(y,A) \leq R\}$ and $N(x,R)=N(\{x\},R)$.

The first part of the following proposition is well known. For an alternative proof see \cite{R}.

\begin{Proposition}
 A proper coarse map $f\colon X \to Y$ between proper discrete metric spaces extends to a continuous map $hf \colon hX \to hY$. The extended portion is a continuous map between coronas $\nu f = hf |_{\nu X} \colon \nu X \to \nu Y$.
 
 Additionally, $f$ is coarsely surjective if and only if $\nu f$ is surjective. 
\end{Proposition}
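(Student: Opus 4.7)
The plan is to use the universal property of the Higson compactification: $hX$ is the maximal ideal space of $C_h(X)$, so a continuous map $X\to hY$ extends continuously over $hX$ exactly when the pullback of every element of $C(hY)\cong C_h(Y)$ lies in $C_h(X)$. First I would verify this pullback condition for the given proper coarse $f$ and any $g\in C_h(Y)$: continuity is automatic since $X$ is discrete, boundedness is inherited, and for slow oscillation, given $R,\epsilon>0$, I pick $S$ via coarseness so that $d(x,x')\leq R$ implies $d(f(x),f(x'))\leq S$, then apply slow oscillation of $g$ at the scale $(S,\epsilon)$ to obtain a bounded $B\subset Y$ with $\diam(g(N(y,S)\setminus B))\leq\epsilon$ for every $y$, and set $A:=f^{-1}(B)$, bounded by properness. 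For $x\in X$ and $z_1,z_2\in N(x,R)\setminus A$ we have $f(z_i)\in N(f(x),S)\setminus B$, so $|g(f(z_1))-g(f(z_2))|\leq\epsilon$, which yields the extension $hf\colon hX\to hY$.

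Next I would show $hf(\nu X)\subset\nu Y$. Since $Y$ is discrete and open in $hY$, every singleton $\{y\}$ with $y\in Y$ is open in $hY$. If $\xi\in\nu X$ and $x_\alpha\to\xi$ is a net with $x_\alpha\in X$, then the $x_\alpha$ escape every bounded set; the assumption $hf(\xi)=y\in Y$ would force $f(x_\alpha)=y$ eventually by isolation, putting $x_\alpha\in f^{-1}(y)$ and contradicting properness. Hence $\nu f:=hf|_{\nu X}$ is a well-defined continuous map into $\nu Y$.

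For the forward direction of the surjectivity equivalence, given $\eta\in\nu Y$ take a net $y_\alpha\to\eta$ in $hY$ with $y_\alpha\in Y$, use coarse surjectivity to lift to $x_\alpha\in X$ with $d(f(x_\alpha),y_\alpha)\leq R$ for a fixed $R$, and by compactness of $hX$ pass to a subnet with $x_\alpha\to\xi\in hX$. If $\xi\in X$ were to occur, discreteness of $X$ forces $x_\alpha=\xi$ eventually, trapping $y_\alpha$ in the bounded set $N(f(\xi),R)$ and contradicting $\eta\in\nu Y$; so $\xi\in\nu X$. For any $g\in C_h(Y)$ slow oscillation applied to the close pair $(y_\alpha,f(x_\alpha))$ gives $|g(f(x_\alpha))-g(y_\alpha)|\to 0$, hence $g(hf(\xi))=\lim g(f(x_\alpha))=\lim g(y_\alpha)=g(\eta)$; since $C(hY)$ separates points, $\nu f(\xi)=\eta$.

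For the converse I would argue contrapositively. If $f(X)$ is not coarsely dense, pick $y_n\in Y$ with $d(y_n,f(X))\to\infty$; since $f(X)\neq\emptyset$, the $y_n$ escape every bounded set, so any cluster point $\eta$ of $\{y_n\}$ in $hY$ lies in $\nu Y$. The same discreteness argument as in the forward direction shows $\nu f(\nu X)=\overline{f(X)}\cap\nu Y$, so it suffices to prove $\eta\notin\overline{f(X)}$. The main technical step, which I expect to be the chief obstacle, is exhibiting a slowly oscillating $g\colon Y\to[0,1]$ that vanishes on $f(X)$ and equals $1$ on $\{y_n\}$. A workable recipe is to thin $\{y_n\}$ so that the balls $N(y_n,r_n)$ are pairwise disjoint and disjoint from $f(X)$, with $r_n\to\infty$ chosen much smaller than $d(y_n,f(X))$ (e.g.\ $r_n=\sqrt{n}$), and take $g$ to be the tent function of height $1$ on each $N(y_n,r_n)$ and $0$ elsewhere; the Lipschitz constants $1/r_n\to 0$ force slow oscillation. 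Extending $g$ to $C(hY)$ then yields $g(\eta)=1$ while $g\circ hf\equiv 0$ on $\nu X$, completing the contrapositive.
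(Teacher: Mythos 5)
Your proposal is correct. For the extension step and for the implication ``$f$ coarsely surjective $\Rightarrow$ $\nu f$ surjective'' it is essentially the paper's argument: the verification that $g\circ f$ is slowly oscillating (choose $S$ by coarseness, pull the bounded set back by properness) is the paper's computation, and your net argument for surjectivity is a reformulation of the paper's argument with the basic neighborhoods $V_\epsilon$ coming from finitely many functions in $C(hY)$. You diverge in two places, both usefully. First, you explicitly check $hf(\nu X)\subseteq \nu Y$ using that bounded subsets of a proper discrete space are finite, hence clopen in the compactification; the paper leaves this point implicit. Second, and more substantially, for the converse you prove the implication that the ``if and only if'' actually requires, namely ``$f$ not coarsely surjective $\Rightarrow$ $\nu f$ not surjective,'' by choosing $y_n$ with $d(y_n,f(X))\to\infty$ and building a slowly oscillating tent function that is $1$ on $\{y_n\}$ and $0$ on $f(X)$. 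The paper's second paragraph instead argues from ``$\nu f$ is not surjective'' to ``$f$ is not coarsely surjective,'' which is the contrapositive of the forward implication rather than of the converse, so as written it duplicates the first half; your version supplies the missing direction. Your separating-function construction is sound --- it is in effect an instance of Proposition \ref{graduallydisjoint}, since $\{y_n\}$ and $f(X)$ are gradually disjoint and $\nu\{y_n\}\neq\emptyset$ because $\{y_n\}$ is unbounded --- with the only small detail being that, when verifying slow oscillation at scale $R$, the excised bounded set should also swallow an $R$-neighborhood of the finitely many discarded tents, which is harmless.
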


\begin{proof}
 Choose $\f\in C_h(Y)$. We claim that $f \f \in C_h(X)$. Choose $R,\e>0$. Let $S>0$ with the property that $\diam(f(A))\leq S$ for each $A\subset X, \diam(A)\leq R$. Choose a bounded set  $C\subset Y$ corresponding to the function $\f$ and parameters $S,\e$, i.e., $\diam(\f(B(y,S)\setminus C)) \leq \e$ for all $y\in Y$. Note that $\diam(f \f(B(x,R)\setminus f^{-1}(C))) \leq \e$ for all $x\in X$, and $f^{-1}(C)$ is bounded as $f$ is proper. By the standard theory of compactifications (see \cite{E}) the required map $hf$ exists.
 
 We next prove that $\nu f$ is surjective provided $f$ is coarsely surjective with a constant $M$, i.e., $ \forall y  \in Y \quad \exists x\in X: d(y,f(x))<M$. Fix $\tilde y\in \nu Y$. Choose an open set 
 $$
 V_\e=\bigcap_{i=1}^k \f_i^{-1}\big(N(\f_i(\tilde y), \e)\big) \subset hY
 $$
 for some $\f_i \in C(hX), \e>0$. Note that sets $V_\e$ form a basis of neighborhoods of $\tilde y \in hY$. For each $i$ choose a bounded $C_i \subset Y$ so that $\diam\big(\f_i(B(z, 2M) \setminus C_i) \big) \leq \e, \forall z\in Y$. Define 
 $$
 C=\bigcup _{i=1}^k N(C_i,2M).
 $$
 We may choose $\hat y \in V_\e \setminus C$ as $C$ is bounded and $V_\e$ is not. We may also choose $\hat x\in X$ so that $d(f(\hat x),\hat y)\leq M$. Therefore
 $$
 d\big(  \f_i(f(\hat x)), \f_i(\tilde y) \big) \leq 
  d\big(  \f_i(f(\hat x)), \f_i(\hat y)  \big) +  
  d\big(  \f_i(\hat y), \f_i(\tilde y)  \big) \leq
  \e+\e = 2 \e,
 $$
 hence $f(\hat x)\in V_{2 \e}$. Since the choice of $\e$ and of the finite collection of  functions $\f_i$ was arbitrary, and since $hf(hX)$ is compact we have $\tilde y \in hf(hX)$, which means that $\nu f$ is surjective.
 
Suppose $\nu f$ is not surjective. Fix $z_0 \in Y$ as a basepoint and define $B_r = N(z_0,r)$. Take any $R>0$. Choose $y\in \nu Y \setminus \nu f(\nu X)$. Since $\nu f(\nu X)$ is closed we may find $\f\in C(hY)$ so that $\f(y)=2$ and $\f (\nu f(\nu X))=0$. Since $y$ is in the closure of $Y$ in $hY$ we may find $q_0\in \NN$ so that for each $q\in \NN, q>q_0$ there exists some $y_q\in Y\setminus B_q$ so that $\f(y_q)>1$. Since $\f|_Y$ is slowly oscillating we may choose $q_1\in \NN$ so that for all $x,y\in Y\setminus B_{q_1}, d(x,y)< R$ we have $|\f(x)-\f(y)| \leq 1/2.$ If for all $y_q, q \geq \max \{q_0, q_1\}$ there was $x_q\in X$ so that $d(f(x_q),y_q) \leq R$ then $\f (f(x_q))\geq 1/2$ and since $x_q$ would have been unbounded by the coarseness of $f$, the set $\nu \{f(x_q)\}_{q \geq \max \{q_0, q_1\}}$ would be nonempty and mapped to values greater or equal to $1/2$. That is a contradiction with the choice of $\f$ hence such $R$ may not exist. Since the choice of $x_q$ was arbitrary we conclude that $f$ is not coarsely surjective.
 \end{proof}

\section{The topology of Higson compactification}

In this section we develop a metric condition (see Proposition \ref{graduallydisjoint}) equivalent to the fact that  disjoint sets in $X$ have disjoint closures in $hX$. The condition is called gradual disjointness. We will also use a metric condition (see Proposition \ref{gradint}) equivalent to the fact that the intersection of coronas of sets in $X$ have nonempty intersection. In this case the sets are said to diverge (see \cite{DKU}). Subsection 4.1 generalizes these notions to countable families. Subsection 4.2 introduces coarsely open maps, which correspond to maps $f$ for which $\nu f$ is open.

\begin{Definition}
Suppose $\A = \{A_1, A_2, \ldots, A_k\}$ is a collection of disjoint subsets of a metric space $X$. Collection $\A$ is
 \textbf{gradually disjoint}  if for each $R>0$ there exists a bounded subset $B\subset X$ so that the sets $N(A_i,R) \setminus B$ are disjoint.
\end{Definition}

It is easy to see that if a collection contains a bounded set then it is gradually disjoint.

\begin{Lemma}\label{lemma55}
 A collection $\A = \{A_1, A_2, \ldots, A_k\}$  is gradually disjoint if and only if for each $i\neq j$ the collection $\{A_i, A_j\}$ is gradually disjoint.
\end{Lemma}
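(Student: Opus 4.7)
The proof should be a straightforward induction / finite union argument; I do not expect any substantive obstacle.

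The plan is to prove both implications. The forward direction is immediate: if $\mathcal{A}$ itself is gradually disjoint then for each $R>0$ we get a single bounded $B\subset X$ witnessing the condition for all of $\mathcal{A}$ simultaneously, which in particular witnesses it for every pair $\{A_i,A_j\}$.

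For the non-trivial direction, assume that for every pair $i\neq j$ the pair $\{A_i,A_j\}$ is gradually disjoint. Fix $R>0$. For each unordered pair $\{i,j\}$ with $i\neq j$, choose a bounded set $B_{i,j}\subset X$ such that
\[
\bigl(N(A_i,R)\setminus B_{i,j}\bigr)\cap \bigl(N(A_j,R)\setminus B_{i,j}\bigr)=\emptyset.
\]
Define
\[
B=\bigcup_{1\leq i<j\leq k} B_{i,j}.
\]
Since the family is finite and each $B_{i,j}$ is bounded, $B$ is bounded. For any $i\neq j$ we have $N(A_i,R)\setminus B\subseteq N(A_i,R)\setminus B_{i,j}$ and similarly for the index $j$, so the two sets are disjoint. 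Hence the entire collection $\{N(A_i,R)\setminus B\}_{i=1}^{k}$ is pairwise disjoint, proving $\mathcal{A}$ is gradually disjoint.

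The only ingredient worth flagging is the finiteness of $\mathcal{A}$, which guarantees that the finite union of bounded sets $B_{i,j}$ remains bounded; without finiteness the argument would fail, and indeed this is precisely why the countable generalization deferred to Subsection 4.1 needs a separate treatment.
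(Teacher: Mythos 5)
Your proof is correct and is exactly the finite-union argument the authors have in mind; the paper itself only states ``The proof is easy and left to the reader.'' Your closing remark about why finiteness is essential (and why the countable case in Subsection 4.1 is handled by requiring each finite subfamily, equivalently each pair, to be gradually disjoint rather than by a single global bounded set) is accurate and worth keeping.
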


\begin{proof}
 The proof is easy and left to the reader.
\end{proof}

\begin{Definition}\cite{DKU}\label{DefGradInt}
Suppose $\A = \{A_1, A_2, \ldots, A_k\}$ is a collection of  subsets of a metric space $X$. Collection $\A$ 
 \textbf{diverges}  if for each $R>0$ there exists a bounded subset $B\subset X$ so that 
 $$
 \bigcap_{i=1}^k N(A_i,R) \setminus B = \emptyset, 
 $$
 i.e., if for each $R>0$ the intersection $ \bigcap_{i=1}^k N(A_i,R)$ is bounded.
\end{Definition}

\begin{Proposition}\cite[Proposition 2.3]{DKU}\label{gradint}
Suppose $\A = \{A_1, A_2, \ldots, A_k\}$ is a collection of  subsets of a proper discrete metric space $X$. Then $\A$ diverges if and only if $\bigcap_{i=1}^k \nu A_i = \emptyset$.
\end{Proposition}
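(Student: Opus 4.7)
The plan is to prove the two directions separately: the reverse implication by a straightforward compactness argument, and the forward implication by an explicit construction of a separating family of Higson functions that behaves like a partition of unity on the coronas.

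For $\bigcap_i \nu A_i = \emptyset \Rightarrow \A$ diverges, I would argue by contrapositive. Assume some $R>0$ makes $\bigcap_i N(A_i, R)$ unbounded and pick a sequence $(x_n)$ in this intersection going to infinity. By compactness of $hX$, pass to a subnet converging to some $\xi \in hX$; since $(x_n)$ is unbounded, $\xi \in \nu X$. For each $i$ choose $a_n^{(i)} \in A_i$ with $d(x_n, a_n^{(i)}) \leq R$. Given any $\varphi \in C_h(X)$, slow oscillation produces a bounded set outside of which the diameter of $\varphi$ on $R$-balls is arbitrarily small; since $x_n \to \infty$ and $a_n^{(i)} \in N(x_n, R)$, this forces $|\varphi(x_n) - \varphi(a_n^{(i)})| \to 0$. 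Hence $a_n^{(i)} \to \xi$ along the same subnet, so $\xi \in \overline{A_i}^{hX} \cap \nu X = \nu A_i$ for every $i$, contradicting emptiness of the intersection.

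For the forward implication $\A$ diverges $\Rightarrow \bigcap_i \nu A_i = \emptyset$, I would set $s(x) = \sum_{j=1}^k d(x, A_j)$ and $\varphi_i(x) = d(x, A_i)/s(x)$, extended in any fixed way (say to $1/k$) on the bounded set $\bigcap_j A_j$ where $s$ vanishes. Divergence is equivalent to the properness of $s$, since $\{s \leq M\} \subset \bigcap_j N(A_j, M) \subset \{s \leq kM\}$, so $s \to \infty$ at infinity. A short $1$-Lipschitz calculation for $d(\cdot, A_i)$ and $s$ yields $|\varphi_i(x) - \varphi_i(y)| \leq (k+1)\,d(x, y)/s(x)$, which pushes the oscillation of $\varphi_i$ on $R$-balls below any $\epsilon > 0$ off a suitable bounded set; combined with discreteness of $X$ (for free continuity), this gives $\varphi_i \in C_h(X)$. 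The identities $\varphi_i|_{A_i} \equiv 0$ off a bounded set and $\sum_i \varphi_i \equiv 1$ off a bounded set survive the continuous extension to $hX$: $\varphi_i$ vanishes on $\nu A_i$, while $\sum_i \varphi_i \equiv 1$ on $\nu X$. Any $\xi \in \bigcap_i \nu A_i$ would then satisfy $\sum_i \varphi_i(\xi) = 0 \neq 1$, a contradiction.

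The main technical obstacle is the slow-oscillation estimate for $\varphi_i$; the crucial observation is that the full $k$-fold divergence (not merely pairwise divergence) is exactly what makes the denominator $s$ proper, which is what drives the oscillation bound to zero at infinity. Properness of $X$ is invoked to identify bounded exceptional sets with compact ones, so that continuous extensions to $hX$ behave as expected.
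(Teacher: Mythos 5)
Your argument is correct, but there is nothing in the paper to compare it against: the authors state this proposition as a citation of \cite[Proposition 2.3]{DKU} and give no proof (the ``alternative proof'' promised in Subsection 4.1 is not actually carried out there; Lemma \ref{DivCount} is deduced \emph{from} Proposition \ref{gradint} via the finite intersection property, not the other way around). Your proof is a complete, self-contained argument in the spirit of the original Dranishnikov--Keesling--Uspenskij proof: the reverse direction by extracting a subnet limit in the compact space $hX$ and using slow oscillation to drag the approximating points $a_n^{(i)}\in A_i$ to the same corona point, and the forward direction via the standard normalized-distance functions $\varphi_i = d(\cdot,A_i)/\sum_j d(\cdot,A_j)$, whose slow oscillation is exactly governed by the properness of the denominator, which in turn is exactly the divergence hypothesis. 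Both halves check out: the Lipschitz estimate $|\varphi_i(x)-\varphi_i(y)|\leq (k+1)d(x,y)/s(x)$ is correct, and the identities $\varphi_i|_{\nu A_i}=0$ and $\sum_i\varphi_i|_{\nu X}=1$ pass to the extensions because deleting a bounded set does not change a corona. Two small points you use implicitly and could state: a subnet limit of a sequence leaving every bounded set must lie in $\nu X$ because $X$ is open in $hX$ for proper $X$, and convergence in $hX$ is tested coordinatewise against $C_h(X)$ since $hX$ is embedded in $\prod_{f\in C_h(X)}\overline{\mathrm{im}(f)}$. Neither is a gap.
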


\begin{Proposition}\label{graduallydisjoint}
Suppose $\A = \{A_1, A_2, \ldots, A_k\}$ is a collection of disjoint subsets of a proper discrete metric space $X$. Then $\A$ is gradually disjoint if and only if the sets $\nu A_i$ are disjoint.
\end{Proposition}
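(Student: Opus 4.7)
My plan is to reduce to the case $k=2$ via Lemma~\ref{lemma55} and prove each direction separately, with the easier direction being the implication ``disjoint coronas $\Rightarrow$ gradually disjoint'' done by contrapositive, and the harder direction being the construction of a slowly oscillating separating function.

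For the ``$\Leftarrow$'' direction I will argue by contraposition. Suppose the pair $\{A_i,A_j\}$ is not gradually disjoint, so there exists some $R>0$ such that, for every bounded $B\subset X$, the set $(N(A_i,R)\cap N(A_j,R))\setminus B$ is nonempty. This lets me pick a sequence $(x_n)$ leaving every bounded set together with $a_n\in A_i$ and $b_n\in A_j$ satisfying $d(x_n,a_n),d(x_n,b_n)\le R$. By compactness of $hX$, after passing to a subnet, $a_n\to a\in\nu A_i$ and $b_n\to b\in\nu A_j$. For any $\f\in C(hX)$, the restriction $\f|_X$ is slowly oscillating, so applied to $R'=2R$ and any $\e>0$ it gives a bounded set $C$ with $\diam\f(N(z,2R)\setminus C)\le\e$ for all $z$; eventually $a_n,b_n\notin C$ and $d(a_n,b_n)\le 2R$, so $|\f(a_n)-\f(b_n)|\le\e$. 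Hence $\f(a)=\f(b)$ for every $\f\in C(hX)$, and since $C(hX)$ separates points, $a=b\in\nu A_i\cap\nu A_j$, contradicting disjointness.

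For the ``$\Rightarrow$'' direction, which I expect to be the main obstacle, I will produce a continuous bounded slowly oscillating function $\f\colon X\to[0,1]$ with $\f\equiv 0$ on $A_1$ and $\f\equiv 1$ on $A_2$; its Higson extension $h\f$ then separates $\nu A_1$ and $\nu A_2$. Set $\rho(x)=d(x,A_1)$, $\sigma(x)=d(x,A_2)$, and
\[
\f(x)=\frac{\rho(x)}{\rho(x)+\sigma(x)}.
\]
Since $X$ is proper and discrete, $A_1\cap A_2=\emptyset$ implies $\rho(x)+\sigma(x)>0$ for every $x\in X$, so $\f$ is well-defined and continuous, with $\f|_{A_1}=0$ and $\f|_{A_2}=1$. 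The content is checking slow oscillation: a short estimate using $|\rho(x)-\rho(y)|,|\sigma(x)-\sigma(y)|\le d(x,y)$ gives
\[
|\f(x)-\f(y)|\le\frac{3\,d(x,y)}{\rho(x)+\sigma(x)}.
\]
Gradual disjointness of $\{A_1,A_2\}$ means that for every $T>0$ there is a bounded $B_T$ with $N(A_1,T)\setminus B_T$ and $N(A_2,T)\setminus B_T$ disjoint, which forces $\rho(x)+\sigma(x)>T$ for every $x\notin B_T$: if $\rho(x)\le T$ then $x\in N(A_1,T)\setminus B_T$, so $x\notin N(A_2,T)$, giving $\sigma(x)>T$. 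Thus $\rho+\sigma\to\infty$ off bounded sets, and given $R,\e>0$ I choose $B=B_{6R/\e}$ to get $\diam\f(N(x,R)\setminus B)\le\e$ for all $x$. Hence $\f\in C_h(X)$, its extension satisfies $h\f|_{\nu A_1}\equiv 0$ and $h\f|_{\nu A_2}\equiv 1$, and the coronas are disjoint as required.
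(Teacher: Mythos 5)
Your proof is correct, and it follows the paper's skeleton only in its first step: like the paper, you invoke Lemma \ref{lemma55} to reduce to pairs. After that the routes diverge. The paper's proof is essentially a citation: it observes that for a pair, gradual disjointness coincides verbatim with divergence in the sense of Definition \ref{DefGradInt}, and then appeals to Proposition \ref{gradint} (Proposition 2.3 of \cite{DKU}) for the equivalence with disjointness of the coronas. You instead reprove that cited equivalence from scratch: your contrapositive argument for ``disjoint coronas $\Rightarrow$ gradually disjoint'' (extracting a common limit point of the nets $(a_n)$, $(b_n)$ because every $\f\in C(hX)$ is slowly oscillating on $X$) and your explicit Urysohn-type separating function $\f=\rho/(\rho+\sigma)$ are precisely the standard proof of the Dranishnikov--Keesling--Uspenskij result. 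Both directions of your argument check out --- the inequality $|\f(x)-\f(y)|\le 3\,d(x,y)/(\rho(x)+\sigma(x))$ holds (trivially when the right side exceeds $1$, and via $\rho(y)+\sigma(y)\ge\rho(x)+\sigma(x)-2d(x,y)$ otherwise), and properness plus discreteness does guarantee $\rho(x)+\sigma(x)>0$ off $A_1\cap A_2=\emptyset$. What your version buys is self-containedness and an explicit separating function in $C_h(X)$ (a construction the paper itself uses later, in Proposition \ref{PropBase}); what the paper's version buys is brevity, since the real content is outsourced to \cite{DKU}. The one point worth stating explicitly in your $\Leftarrow$ direction is why the subnet limits $a$ and $b$ land in the corona rather than in $X$: since $X$ is open in $hX$ and the $a_n$ eventually leave every bounded set, no subnet can converge to a point of $X$.
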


\begin{proof}
By Lemma \ref{lemma55} it suffices to consider collections of pairs.  By Definition \ref{DefGradInt} a collection $\{A_m, A_j\}$  is gradually disjoint if and only if it diverges, which is equivalent to $\nu A_m \cap \nu A_j = \emptyset$ by  Proposition \ref{gradint}.
\end{proof}

%%%%%%%%%%%%%%%%%%%%%%%%%%%%
%Grad Inter

 %%%%%%%%%%%%%%%%%%%%%%%%%%%%%%% 

\subsection{Countable families}
In this subsection we generalize divergent and gradually disjoint families. We also provide an alternative proof of Proposition \ref{gradint} which follows from a  generalization,  proved using the theory developed in the previous subsection. 

\begin{Definition}
Suppose $\A = \{A_1, A_2, \ldots\}$ is a collection of disjoint subsets of a metric space $X$. Collection $\A$ is
 \textbf{gradually disjoint}  if for each $R>0$ and for each finite $\B \subset \A$ there exists a bounded subset $B\subset X$ so that the sets $\{N(A_i,R) \setminus B\}_{A_i\in\B}$ are disjoint, i.e., if each finite subfamily $\B$ is gradually disjoint, or equivalently, if each pair is gradually disjoint.
\end{Definition}

\begin{Proposition}\label{graduallydisjointCount}
Suppose $\A = \{A_1, A_2, \ldots\}$ is a collection of disjoint subsets of a proper discrete metric space $X$. Then $\A$ is gradually disjoint if and only if the sets $\nu A_i$ are disjoint.
\end{Proposition}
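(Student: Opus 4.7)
The plan is to reduce everything to the pairwise case already handled by Proposition \ref{graduallydisjoint}. Unpacking the two definitions, the statement ``$\A$ is gradually disjoint'' in the countable setting was defined precisely to mean that each pair $\{A_i, A_j\}$ with $i\neq j$ is gradually disjoint in the sense of the previous subsection, and the conclusion ``the sets $\nu A_i$ are disjoint'' is by convention the assertion of pairwise disjointness $\nu A_i \cap \nu A_j = \emptyset$ for all $i \neq j$. Both conditions are therefore quantified over pairs $(i,j)$, so the biconditional follows immediately once one establishes it for each pair separately.

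Concretely, I would argue as follows. For the forward direction, assume $\A$ is gradually disjoint. Fix $i \neq j$. By definition, the pair $\{A_i, A_j\}$ is gradually disjoint, so applying Proposition \ref{graduallydisjoint} to the two-element family $\{A_i, A_j\} \subset X$ we obtain $\nu A_i \cap \nu A_j = \emptyset$. Since $i,j$ were arbitrary, the sets $\{\nu A_i\}_{i \in \NN}$ are pairwise disjoint.

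For the reverse direction, suppose $\{\nu A_i\}_{i\in\NN}$ are pairwise disjoint. Fix any pair $i \neq j$; then $\nu A_i \cap \nu A_j = \emptyset$, and Proposition \ref{graduallydisjoint} applied to $\{A_i,A_j\}$ yields that this pair is gradually disjoint. Since this holds for every pair, the countable family $\A$ is gradually disjoint by definition.

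The main obstacle, such as it is, is purely bookkeeping: one must confirm that the definition of gradual disjointness for countable families given just before the proposition really does reduce to the pairwise condition (which it explicitly does, via the equivalence ``each finite subfamily is gradually disjoint $\Leftrightarrow$ each pair is gradually disjoint'' coming from Lemma \ref{lemma55}), and that ``disjoint'' for a countable collection of subsets of $hX$ is interpreted pairwise. Once these conventions are unpacked, no new analysis beyond Proposition \ref{graduallydisjoint} is required.
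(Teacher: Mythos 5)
Your proof is correct and follows essentially the same route as the paper: both reduce the countable statement to Proposition \ref{graduallydisjoint} via the definition of gradual disjointness for countable families (the paper quantifies over finite subfamilies, you over pairs, but the definition explicitly declares these equivalent). No gap.
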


\begin{proof}
 Choose a finite $\B \subset \A$. By Proposition \ref{graduallydisjoint} a collection $\B$ is gradually disjoint if and  only if the sets $\{\nu A_i\}_{A_i\in\B}$ are disjoint. Consequently $\A$ is disjoint.
\end{proof}

%\begin{Lemma}\label{ref1}
%A collection $\{A_i\}_{i\in \NN}$ of disjoint subsets of $X$ is gradually disjoint if and only if for each finite $\B \subset \A$ there exist TI-increasing functions $\rho_i \colon [0,\infty) \to [0,\infty)$ for which the collection $\{N(A_i,\rho_i)\}_{A_i\in \B}$ is disjoint.
%\end{Lemma}
%
%\begin{proof}
% Choose a finite $\B\subset \A$. By Proposition \ref{graduallydisjoint} a collection $\B$ is gradually disjoint if and  only if the sets $\{\nu A_i\}_{A_i\in\B}$ are disjoint. 
% 
%If there exist TI-increasing functions $\rho_i \colon [0,\infty) \to [0,\infty)$ for which the collection $\{N(A_i,\rho_i)\}_{A_i\in \B}$ is disjoint, then $\{\nu N(A_i,\rho_i/2)\}_{A_i\in \B}$ are disjoint closed neighborhoods of sets $\{\nu A_i\}_{A_i\in\B}$, which are consequently disjoint.
%
%Assume now that the sets $\{\nu A_i\}_{A_i\in\B}$ are disjoint. 
%By Proposition \ref{PropBase1} we may choose TI-increasing functions $\rho_i$ for which the collection $\{\nu N(A_i,\rho_i)\}_{A_i\in \B}$ is disjoint. Thus the intersection of each finite subfamily of $\{N(A_i,\rho_i)\}_{A_i\in \B}$ is bounded. Let $d$ denote the diameter of the (finite) union of all these intersections. The  conclusion holds for TI-increasing functions $\max\{0,\rho_i-d\}$.
%\end{proof}

%%%%%%%%%%%%%%%%%%%%%%%%%%%%
%Divergent family

\begin{Definition}\label{DefGradIntCount}
Suppose $\A = \{A_1, A_2, \ldots\}$ is a collection of  subsets of a metric space $X$. Collection $\A$ 
 \textbf{diverges}  if there exists a finite $\B \subset \A$ so that for each $R>0$ there exists a  bounded subset $B\subset X$ so that
  $$
 \bigcap_{A_i\in\B} N(A_i,R) \setminus B = \emptyset.
 $$
 Equivalently, a collection $\A$ diverges if there exists a finite $\B \subset \A$ that diverges.
 \end{Definition}

%\begin{Lemma}
%A collection $\{A_i\}_{i\in \NN}$ of disjoint subsets of $X$ diverges if and only if for each finite $\B \subset \A$ there exist TI-increasing functions $\rho_i \colon [0,\infty) \to [0,\infty)$ for which the collection $\{N(A_i,\rho_i)\}_{A_i\in\B}$ has  empty intersection.
%\end{Lemma}

\begin{Lemma}\label{DivCount}
Suppose $X$ is a proper discrete metric space. A collection $\{A_i\}_{i\in \NN}$ of disjoint subsets of $X$ diverges if and only if $\bigcap_{i\in \NN} \nu A_i = \emptyset.$
\end{Lemma}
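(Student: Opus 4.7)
The plan is to reduce the statement to the finite case (Proposition \ref{gradint}) by exploiting the compactness of $\nu X$ together with the fact that each set $\nu A_i$ is closed in $\nu X$.

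For the ``only if'' direction, I would unfold Definition \ref{DefGradIntCount}: divergence of the countable family $\A = \{A_i\}_{i\in\NN}$ means that some finite subfamily $\B \subset \A$ diverges in the sense of Definition \ref{DefGradInt}. By Proposition \ref{gradint}, this yields $\bigcap_{A_i \in \B} \nu A_i = \emptyset$. Since the intersection over all of $\NN$ is contained in the intersection over $\B$, we immediately get $\bigcap_{i\in\NN} \nu A_i = \emptyset$.

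For the ``if'' direction, the key observation is that each $\nu A_i = \nu X \cap \overline{A_i}$ is a closed subset of the compact Hausdorff space $\nu X$ (which is closed in the compact space $hX$). Thus $\{\nu A_i\}_{i\in\NN}$ is a family of closed subsets of a compact space whose total intersection is empty. By the finite intersection property, there exists a finite subfamily $\B = \{A_{i_1}, \ldots, A_{i_k}\} \subset \A$ with
\[
\bigcap_{j=1}^{k} \nu A_{i_j} = \emptyset.
\]
Applying Proposition \ref{gradint} to $\B$, we conclude that $\B$ diverges in the sense of Definition \ref{DefGradInt}, so by Definition \ref{DefGradIntCount} the countable family $\A$ diverges.

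There is no real obstacle here: the entire content of the lemma is that compactness of $\nu X$ lets us pass from the countable intersection condition to a finite one, where Proposition \ref{gradint} already supplies the metric translation. The disjointness hypothesis on $\{A_i\}$ is not needed for the argument; it is carried along only for consistency with the parallel gradual disjointness framework developed earlier in the subsection.
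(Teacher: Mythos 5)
Your proof is correct and follows the same route as the paper: use the finite intersection property for the closed sets $\nu A_i$ in the compact space $\nu X$ to reduce to a finite subfamily, then apply Proposition \ref{gradint}. Your write-up is just a more detailed unfolding of the paper's two-sentence argument (and your side remark that disjointness is not actually used is accurate).
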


\begin{proof}
Since sets $\nu A_i$ are closed subsets of a compact space we may use the intersection property: $\bigcap_{i\in \NN} \nu A_i = \emptyset$ if and only if there exists a finite $\B \subset \A$ for which $\bigcap_{A_i\in\B} \nu A_i = \emptyset.$ By Proposition \ref{gradint} this is equivalent to the fact that $\A$  diverges.
\end{proof}

 %%%%%%%%%%%%%%%%%%%%%%%%%%%%%%%
 \subsection{Coarsely open maps}

 Throughout this subsection let $x_0\in X$ be a fixed basepoint.
 
\begin{Definition}
A function $\rho\colon [0,\infty) \to [0,\infty)$ is \textbf{TI-increasing} (towards infinity increasing) if it is increasing (not necessarily strictly increasing) and $\lim_{t\to\infty}\rho(t)=\infty$.

 Given $A\subset X$ and a TI-increasing $\rho \colon [0,\infty) \to [0,\infty)$ define 
 $$
 N(A,\rho)=\bigcup _{x\in A} N\Big(x, \rho (d(x,x_0))\Big).
 $$
\end{Definition}

\begin{Remark}\label{basis}
Let $X$ be a proper metric space (or, more generally, a proper coarse space). The collection
 $$
 \V = \{  \f^{-1}([0,\eps)) \cap \nu X \mid \eps > 0, \f\colon hX \to [0,1]  \textit{ continuous}\}.
 $$
 is a basis for $\nu X$ by the following two arguments:
\begin{itemize}
 \item It is, by definition, a subbase for the topology on $\nu X$. 
 \item It is closed under intersections: if $f,g\colon hX\to [0,1]$ and $1 > \eps > 0$, then 
 $$
 (f\cdot g)^{-1}([0,\eps)) \subset f^{-1}([0,\eps))\cap g^{-1}([0,\eps)).$$
\end{itemize}
 \end{Remark} 
 
%\begin{Proposition}\label{PropSubb}
 %Suppose $X$ is a proper discrete metric space. The collection 
% $$
% \U=\{\nu N(A,\rho) \mid A \subset X, \rho \colon (0,\infty) \to (0,\infty) \textit{ increasing}, \lim_{t\to\infty} \rho(t)=\infty\}
% $$ 
% is a subbase for the topology on $\nu X.$
%\end{Proposition}

\begin{Proposition}\label{PropBase}
Suppose $X$ is a proper discrete metric space. The collection 
 $$
 \U=\{\nu N(A,\rho) \mid A \subset X, \rho \colon (0,\infty) \to (0,\infty) \textit{ TI-increasing}\}
 $$ 
 is a base for the topology on $\nu X.$
\end{Proposition}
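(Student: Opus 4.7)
The plan is to combine the basis from Remark~\ref{basis} — consisting of sets $V=\f^{-1}([0,\eps))\cap\nu X$ for continuous $\f\colon hX\to[0,1]$ and $\eps>0$ — with the slow oscillation of $\f|_X\in C_h(X)$ to exhibit, for every $y\in V$, a set $\nu N(A,\rho)\in\U$ with $y\in\nu N(A,\rho)\subset V$.

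Fix $y\in V$, pick $\eps'\in(\f(y),\eps)$, and set $A:=\{x\in X\mid\f(x)<\eps'\}$. Since $\f^{-1}([0,\eps'))$ is an open neighborhood of $y$ in $hX$ meeting $X$ in $A$, and $X$ is dense in $hX$, we get $y\in\overline A\cap\nu X=\nu A$; continuity gives $\nu A\subset \f^{-1}([0,\eps'])\cap\nu X\subset V$. I now build $\rho$ so that $\f$ oscillates little on $\rho$-balls about far-away points: for each $k\in\NN$ slow oscillation produces a bounded $B_k\subset X$ with $\diam\f(N(x,k)\setminus B_k)\leq 1/k$ for every $x$, which I enlarge to $B_k=N(x_0,r_k)$ with $r_k$ strictly increasing to infinity. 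Put $s_k:=r_k+k$ (plus a safety term if needed) and define $\rho$ as the nondecreasing step function equal to $k$ on $[s_k,s_{k+1})$; this $\rho$ is TI-increasing. For $a$ with $d(a,x_0)\in[s_k,s_{k+1})$, the ball $N(a,\rho(d(a,x_0)))=N(a,k)$ misses $B_k$, hence has $\f$-diameter at most $1/k$, tending to $0$ as $d(a,x_0)\to\infty$.

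Next take $z\in\nu N(A,\rho)$ and a net $x_\alpha\to z$ with $x_\alpha\in N(a_\alpha,\rho(d(a_\alpha,x_0)))$ and $a_\alpha\in A$. Since $z\in\nu X$ avoids the closure of every bounded subset of $X$, we have $d(x_\alpha,x_0)\to\infty$; combining $d(a_\alpha,x_0)+\rho(d(a_\alpha,x_0))\geq d(x_\alpha,x_0)$ with the unboundedness of $t+\rho(t)$ forces $d(a_\alpha,x_0)\to\infty$ as well. Hence $|\f(x_\alpha)-\f(a_\alpha)|\to 0$ by the bound established above, and $\f(a_\alpha)<\eps'$ gives $\f(z)=\lim\f(x_\alpha)\leq\eps'<\eps$, so $z\in V$. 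Together with $A\subset N(A,\rho)$ (whence $y\in\nu A\subset\nu N(A,\rho)$), this proves the sandwich $y\in\nu N(A,\rho)\subset V$.

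The remaining and, in my view, most delicate step is confirming that each $\nu N(A,\rho)$ is genuinely open in $\nu X$. My plan is to show that the disjoint pair $\{N(A,\rho),\,X\setminus N(A,\rho)\}$ is gradually disjoint and then invoke Proposition~\ref{graduallydisjoint}, which yields two disjoint closed subsets $\nu N(A,\rho)$ and $\nu(X\setminus N(A,\rho))$ covering $\nu X$, making both clopen. The TI-increasing growth of $\rho$ is what should drive gradual disjointness: for a prescribed $R>0$, once $d(a,x_0)$ is large enough that $\rho(d(a,x_0))$ dominates $R$, the $R$-thickenings of $N(A,\rho)$ and its complement must separate outside a sufficiently large ball around $x_0$. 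Verifying this precisely — and, if necessary, refining the construction of $\rho$ inside the sandwich argument above to guarantee it — is where I expect the real work to lie.
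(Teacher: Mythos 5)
Your first half (finding, for each $y$ in a subbasic set $V=\f^{-1}([0,\eps))\cap\nu X$, a set $\nu N(A,\rho)$ with $y\in\nu A\subset\nu N(A,\rho)\subset V$) is correct and is essentially the paper's own argument: the paper takes $A=\f^{-1}([0,\eps/3))$ and defines $\rho$ from the slow-oscillation moduli exactly as you do.

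The gap is in the step you yourself flag as delicate, and your proposed mechanism for it is false: the pair $\{N(A,\rho),\,X\setminus N(A,\rho)\}$ is in general \emph{not} gradually disjoint, so Proposition~\ref{graduallydisjoint} does not apply and $\nu N(A,\rho)$ need not be clopen. Concretely, take $X=\ZZ$, $x_0=0$, and any TI-increasing $\rho$; choose $A=\{a_1<a_2<\cdots\}\subset\NN$ recursively so that $a_{n+1}-a_n>2\rho(a_{n+1})+2\rho(a_n)+10$. Then $N(A,\rho)$ is a disjoint union of intervals $I_n\ni a_n$ with unbounded gaps between them, so for every $n$ the right endpoint of $I_n$ lies in $N(A,\rho)$ and is adjacent to a point of $X\setminus N(A,\rho)$; hence $N\big(N(A,\rho),1\big)\cap N\big(X\setminus N(A,\rho),1\big)$ is unbounded and no bounded set $B$ can separate the two $1$-neighborhoods. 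The problem cannot be repaired by ``refining $\rho$'': whatever $\rho$ you pick, an $N(A,\rho)$ with infinitely many separated components has an unbounded frontier, and points just inside and just outside it are at distance $1$ arbitrarily far from $x_0$.

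The correct device, and the one the paper uses, is to interpolate between two different radii rather than between $N(A,\rho)$ and its complement. The pair $\{N(A,\tfrac13\rho),\,X\setminus N(A,\rho)\}$ \emph{is} gradually disjoint, because a point $R$-close to both forces $\rho(d(a,x_0))-R<d(p,a)\leq\tfrac13\rho(d(a,x_0))+R$ for some $a\in A$, hence $\rho(d(a,x_0))<3R$, which bounds $d(a,x_0)$ and therefore $d(p,x_0)$. One then defines a slowly oscillating Urysohn-type function equal to $0$ on $N(A,\tfrac13\rho)$ and $1$ on $X\setminus N(A,\rho)$, extends it over $X$ by \cite[Corollary 4.9]{DM} and over $hX$ by the universal property, and obtains the open set $\f^{-1}([0,1/2))\cap\nu X$ squeezed between $\nu A$ and $\nu N(A,\rho)$. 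Note that this establishes only that $\nu N(A,\rho)$ is a neighborhood of every point of $\nu A$ (which contains your point $y$), not that it is open; that weaker neighborhood-base property is what the proposition is actually used for later (see Proposition~\ref{PropBase1} and the proof of Proposition~\ref{InducedOpen}), so your proof should be restructured to target that statement rather than clopenness.
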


\begin{proof}
By remark \ref{basis}, the topology of $\nu X$ is given by 
 $$
 \V = \{  \f^{-1}([0,\eps)) \cap \nu X \mid \eps > 0, \f\colon hX \to [0,1]  \textit{ continuous}\}.
 $$
 It thus suffices to compare $\U$ and $\V$.
 
 Choose $x\in X, \eps>0$ and a continuous $\f\colon hX \to [0,1], \f(x)=0$. Define $A=\f^{-1}([0,\eps/3))$. As $\f|_X$ is slowly oscillating there exists for each $R >0$ some $S_R>0$ so that $\diam (\f(N(x',R)\setminus N(x_0,S_R))) \leq \eps/3, \forall x'\in X$. Define $\r(S_R+R)=R$ (without loss of generality we may assume that $\rho$ defined in such way is increasing towards infinity) and note that $N(A,\rho)\subset \f^{-1}([0,2\eps/3))$ hence $x\in \nu N(A,\rho)\subset  \f^{-1}([0,\eps))$.
 
 Choose $A\subset X, x\in \nu A, \rho \colon (0,\infty) \to (0,\infty)$ increasing, $\lim_{t\to\infty} \rho(t)=\infty$. We will construct a slowly oscillating $\f$ so that $x\in \f^{-1}([0,1/2)) \cap \nu X \subset \nu N(A,\rho)$. Define $\f(N(A, 1/3 \cdot \rho))=0$ and $\f(X \setminus N(A,\rho))=1$. By the properties of $\rho$ the sets $N(A, 1/3 \cdot \rho)$ and  $X \setminus N(A,\rho)$ are gradually disjoint hence 
 $$
 \f\colon \Big(N(A, 1/3 \cdot \rho) \cup X \setminus N(A,\rho)\Big)\to[0,1]
 $$
 is slowly oscillating. Extend it over $X$  by  \cite[Corollary 4.9]{DM} and over $hX$ by the universal property. Let $\f$ also denote the obtained extension. Note that $\f^{-1}([0,1/2))\cap  X\subset N(A,\rho)$ hence $x\in \f^{-1}([0,1/2)) \cap \nu X \subset \nu N(A,\rho)$.
\end{proof}

\begin{Proposition}\label{PropBase1}
Suppose $X$ is a proper discrete metric space and $A\subset X$ is unbounded. The collection 
 $$
 \U=\{\nu N(A,\rho) \mid \rho \colon (0,\infty) \to (0,\infty) \textit{ TI-increasing}\}
 $$ 
 is a base for the collection of neighborhoods of $\nu A \subset \nu X.$
\end{Proposition}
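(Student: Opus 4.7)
The plan is to prove two inclusions: (a) every $\nu N(A,\rho)\in\U$ is a neighborhood of $\nu A$ in $\nu X$; and (b) every open neighborhood $W$ of $\nu A$ in $\nu X$ contains some element of $\U$. The strategy mirrors Proposition \ref{PropBase}, but now tracks the compact set $\nu A$ instead of a single point.

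For (a), given a TI-increasing $\rho$, I would set $\rho':=\rho/3$ and first verify that $N(A,\rho')$ and $X\setminus N(A,\rho)$ are gradually disjoint. If $z\in N(N(A,\rho'),R)\cap N(X\setminus N(A,\rho),R)$, choose $a\in A$ with $d(z,a)\le\rho'(d(a,x_0))+R$ and $b\in X\setminus N(A,\rho)$ with $d(z,b)\le R$; then $\rho(d(a,x_0))\le d(b,a)\le 2R+\rho'(d(a,x_0))$ forces $\rho(d(a,x_0))<3R$, so both $d(a,x_0)$ and $d(z,x_0)$ are bounded. The $\{0,1\}$-valued function equal to $0$ on $N(A,\rho')$ and $1$ on $X\setminus N(A,\rho)$ is therefore slowly oscillating on its domain, and extends via \cite[Corollary 4.9]{DM} and the universal property of $hX$ to a continuous $\f\colon hX\to[0,1]$. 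The set $V:=\f^{-1}([0,1/2))\cap\nu X$ is open in $\nu X$, contains $\nu A$ (as $\f\equiv 0$ on $A$), and is contained in $\nu N(A,\rho)$ because $\f^{-1}([0,1/2))\cap X\subset N(A,\rho)$.

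For (b), I would apply Urysohn's lemma on the compact Hausdorff space $hX$ to the disjoint closed subsets $\nu A$ and $\nu X\setminus W$, obtaining a continuous $\psi\colon hX\to[0,1]$ with $\psi|_{\nu A}=0$ and $\psi|_{\nu X\setminus W}=1$; then $\psi|_X\in C_h(X)$. The key lemma is that for each $R>0$ the set
$$A_R=\{a\in A:\psi(N(a,R))\not\subset[0,1/2)\}$$
is bounded: otherwise a sequence $a_n\in A_R$ with $d(a_n,x_0)\to\infty$ and witnesses $y_n\in N(a_n,R)$ satisfying $\psi(y_n)\ge 1/2$ would, by the slow oscillation of $\psi|_X$ applied with $\e=1/4$, force $\psi(a_n)\ge 1/4$ for large $n$; any cluster point $\tilde a\in\nu A$ of $(a_n)$ in $hX$ then satisfies $\psi(\tilde a)\ge 1/4$, contradicting $\psi|_{\nu A}=0$. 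Choosing $S_R$ non-decreasing with $A_R\subset N(x_0,S_R)$, I would define the TI-increasing $\rho$ by $\rho(t)=R$ on $(S_R,S_{R+1}]$, so that every $a\in A$ with $d(a,x_0)>S_1$ gives $\psi(N(a,\rho(d(a,x_0))))\subset[0,1/2)$.

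The main obstacle is the final verification that $\nu N(A,\rho)\subset W$. If $\tilde z\in\nu N(A,\rho)\setminus W$ then $\psi(\tilde z)=1$ while $\tilde z$ is a limit of a net $(z_\alpha)$ in $N(A,\rho)$ with companions $a_\alpha\in A$ satisfying $d(z_\alpha,a_\alpha)\le\rho(d(a_\alpha,x_0))$. Since $\tilde z\in\nu X$ and bounded subsets of the proper discrete space $X$ are finite, the $z_\alpha$ leave every ball eventually, and the inequality $d(z_\alpha,x_0)\le d(a_\alpha,x_0)+\rho(d(a_\alpha,x_0))$ together with the fact that $t\mapsto t+\rho(t)$ is TI-increasing forces $d(a_\alpha,x_0)\to\infty$. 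Hence $\psi(z_\alpha)<1/2$ eventually, contradicting $\psi(\tilde z)=1$. I expect the principal delicacy to lie in this last step, ensuring that the bookkeeping with the radii $S_R$ really produces a $\rho$ for which the net argument goes through cleanly in the non-metrizable $hX$.
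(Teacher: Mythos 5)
Your proposal is correct and follows the same route as the paper, which simply declares this proof ``almost identical'' to that of Proposition \ref{PropBase}: your part (a) reproduces the second half of that proof (the $\rho/3$ gradual-disjointness trick and the extension via \cite[Corollary 4.9]{DM}) essentially verbatim, and your part (b) is the natural adaptation of its first half from a single point to the compact set $\nu A$, with Urysohn's lemma on the normal space $hX$ supplying the separating function and your boundedness lemma for $A_R$ playing the role of the slow-oscillation estimate there. The bookkeeping you flag at the end --- choosing $S_R\to\infty$ so that $\rho$ is genuinely TI-increasing, and the closing net argument forcing $d(a_\alpha,x_0)\to\infty$ --- is exactly the content the paper hides in ``almost identical,'' and it goes through as you describe.
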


\begin{proof}
 The proof is almost identical to the proof of Proposition \ref{PropBase}.
\end{proof}

\begin{Definition}\label{CoarselyOpen}
  Suppose $f\colon X\to Y$ is a map between a proper  metric spaces. Map $f$ is \textbf{coarsely open} if for each unbounded $A\subset X$ and for each TI-increasing $\rho \colon [0,\infty) \to [0,\infty)$ there exists a TI-increasing $\tilde\rho \colon [0,\infty) \to [0,\infty)$ so that $N(f(A), \tilde \rho) \subset f(N(A,\rho))$.
\end{Definition}

\begin{Proposition}\label{InducedOpen}
Suppose $f\colon X\to Y$ is a map between proper discrete metric spaces. If $f$ is coarsely open then $\nu f$ is open.
\end{Proposition}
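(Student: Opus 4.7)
The plan is to prove each implication using the neighborhood bases from Propositions \ref{PropBase} and \ref{PropBase1}, together with the identity $\nu f(\nu N(A,\rho)) = \nu(f(N(A,\rho)))$, which holds for any $A\subset X$ and any TI-increasing $\rho$ by a short compactness-and-properness argument (continuity of $hf$ gives one inclusion; lifting a sequence $f(x_n)\to y\in\nu Y$ via properness of $f$ to a subsequence of $\{x_n\}$ converging in $hX$ gives the other).

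For the forward direction, I would take $x\in\nu X$ and an open neighborhood $U$ of $x$; by Proposition \ref{PropBase}, there exist $A\subset X$ and a TI-increasing $\rho$ with $x\in \nu N(A,\rho)\subset U$. Setting $A^{*}:=N(A,\rho)$, I have $x\in\nu A^{*}$ and $\nu A^{*}\subset U$, so by Proposition \ref{PropBase1} there is a TI-increasing $\sigma$ with $\nu N(A^{*},\sigma)\subset U$. Applying the coarsely open condition to $(A^{*},\sigma)$ produces a TI-increasing $\tilde\sigma$ with $N(f(A^{*}),\tilde\sigma)\subset f(N(A^{*},\sigma))$; passing to coronas yields $\nu N(f(A^{*}),\tilde\sigma)\subset \nu f(\nu N(A^{*},\sigma))\subset \nu f(U)$. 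Since $\nu f(x)\in \nu f(\nu A^{*})\subset \nu(f(A^{*}))\subset \nu N(f(A^{*}),\tilde\sigma)$, this exhibits an open basic neighborhood of $\nu f(x)$ inside $\nu f(U)$, so $\nu f$ is open.

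For the reverse direction, I fix unbounded $A\subset X$ and TI-increasing $\rho$ and try to construct $\tilde\rho$ with $N(f(A),\tilde\rho)\subset f(N(A,\rho))$. By the assumption, $\nu f(\nu N(A,\rho))=\nu(f(N(A,\rho)))$ is open in $\nu Y$ and contains $\nu(f(A))$; Proposition \ref{PropBase1} applied on the target side then provides a TI-increasing $\tilde\rho_{0}$ with $\nu N(f(A),\tilde\rho_{0})\subset \nu(f(N(A,\rho)))$. To upgrade this corona-level inclusion to a set-level inclusion in $Y$, I would replace $\tilde\rho_{0}$ by a TI-increasing $\tilde\rho$ that vanishes on $[0,2T]$ and is capped by $(t-T)/2$ for $t>2T$, with $T$ chosen large enough that the $\tilde\rho$-fattening of $f(A)$ stays outside any bounded discrepancy; both the TI-increasing property (minimum of two TI-increasing functions off the initial interval) and the modified containment are then easy to check.

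The main obstacle is precisely this final descent: one has to argue that the difference $N(f(A),\tilde\rho_{0})\setminus f(N(A,\rho))$ is controlled by a bounded subset of $Y$ before the modification, which is where the discreteness and properness of $Y$ enter essentially together with the corona-level constraint $\nu N(f(A),\tilde\rho_{0})\subset\nu(f(N(A,\rho)))$. Once this boundedness is in hand, the cap-and-vanish modification of $\tilde\rho_{0}$ described above delivers the strict inclusion required by the coarsely open condition.
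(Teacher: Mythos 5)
Your forward implication is correct and is essentially the paper's own argument written out in more detail: reduce to a basic neighborhood $\nu N(A,\rho)$ of $x$ with $x\in\nu A$, apply Definition \ref{CoarselyOpen}, and push the resulting set-level inclusion to the corona via the identity $\nu f(\nu B)=\nu(f(B))$ (which the paper also uses, inside the proof of Theorem \ref{compactification1}). The interposed step with $A^{*}$ and $\sigma$ is harmless.

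The reverse implication has a genuine gap exactly at the point you flag as ``the main obstacle,'' and that obstacle cannot be overcome as stated: a corona-level inclusion $\nu B\subset\nu C$ does \emph{not} force $B\setminus C$ to be bounded. Take $Y=\ZZ$, $B=\ZZ$, $C=2\ZZ$: the two sets are at Hausdorff distance $1$, so $\nu B=\nu C$, yet $B\setminus C$ is unbounded. Your cap-and-vanish modification of $\tilde\rho_{0}$ would indeed finish the argument \emph{if} boundedness of the discrepancy were known, but neither discreteness, nor properness, nor the corona-level constraint supplies it. In fact the implication ``$\nu f$ open $\Rightarrow f$ coarsely open'' appears to fail for the literal Definition \ref{CoarselyOpen}: the map $n\mapsto 2n$ on $\ZZ$ is a coarse equivalence, so $\nu f$ is a homeomorphism and in particular open, yet for $A=\ZZ$ one has $f(N(A,\rho))=2\ZZ$ while $N(f(A),\tilde\rho)$ contains odd integers arbitrarily far from the origin for every TI-increasing $\tilde\rho$; hence no $\tilde\rho$ witnesses coarse openness. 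For comparison, the paper's proof of this direction is a two-line contrapositive (``$N(f(A),\tilde\rho)\not\subset f(N(A,\rho))$ for all $\tilde\rho$ means $f(U)$ is not a neighborhood of $\nu A$'') which silently makes the very same identification of a set-level inclusion with a corona-level one, so you have not overlooked an argument that the authors actually supply; the definition evidently needs to be relaxed to a containment up to coarse error before either your descent or the paper's contrapositive can be made rigorous.
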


\begin{proof}
We will use Propositions \ref{PropBase} and \ref{PropBase1} throughout the proof.

Suppose $f$ is coarsely open. Choose $x\in \nu X$ and an open subset $U \subset \nu X$ containing $x$. We may assume that $U=\nu N(A,\rho)$ for some choice of $A$ with $x\in \nu A$ and $\rho$. The existence of $\tilde \rho$ as in Definition \ref{CoarselyOpen} means that $f(x)$ is contained in an open set $N(f(A), \tilde \rho) \subset f(N(A,\rho))$. Hence $f(U)$ is a neighborhood of $f(x)$. Since $x$ and $U$ were aribtrary we conclude that $\nu f$ is open.
%
%In order to prove the converse assume that $f$ is not coarsely open. Thus there exist $A\subset X$ and  a TI-increasing $\rho \colon [0,\infty) \to [0,\infty)$ so that for each  TI-increasing $\tilde\rho \colon [0,\infty) \to [0,\infty)$ we have $N(f(A), \tilde \rho) \not\subset f(N(A,\rho))$. That means that $f(U)$ is not a neighborhood of $\nu A$, where $U=N(A,\rho)$, hence $\nu f$ is not open.
\end{proof}
 
%Proposition \ref{InducedOpen} implies that the property of being coarsely open is a coarse property. In fact, any property of metric spaces (or of maps between them), which is equivalent to some topological property of Higson corona (or of the induced maps) is a coarse property. 
% 
%\begin{Corollary}
%The property of being coarsely open is a coarse property, i.e., if $f\colon X\to Y$ is a map between proper discrete metric spaces, and $g_1\colon X' \to X$ and $g_2\colon Y \to Y'$ are coarse equivalences, then $f$ is coarsely open if and only if $g_2 f g_1$ is coarsely open.
%\end{Corollary}
%
%\begin{proof}
% Use Proposition \ref{InducedOpen} and the fact that coarse equivalences induce homeomorphisms of Higson coronas.
%\end{proof}

\begin{Proposition}\label{OpenGroupAction}
 If a finite group $G$ acts on $X$ by coarse maps then $X \to G\setminus X$ is a coarsely open map.
\end{Proposition}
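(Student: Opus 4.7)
The plan is to verify coarse openness of $\pi \colon X \to G\setminus X$ directly from Definition \ref{CoarselyOpen}. Equip $G\setminus X$ with the standard quotient (pseudo)metric $d(Gx, Gy) = \min_{g_1, g_2 \in G} d(g_1 x, g_2 y)$ and use $\pi(x_0)$ as the basepoint. The slogan is: if a class $z$ lies near $\pi(a)$ for some $a \in A$, then the pair of group elements witnessing this proximity can be used to ``unfold'' $z$ into a representative that lies near $a$ itself, with a uniform quantitative loss determined by the action.

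First I would set up a single uniform control function for $G$. Since $G$ is finite and each $g \in G$ is a coarse equivalence (its inverse $g^{-1}$ is again in $G$, hence coarse), taking the maximum of the finitely many coarse moduli gives a non-decreasing $\Phi \colon [0, \infty) \to [0, \infty)$ with $d(gx, gy) \leq \Phi(d(x, y))$ for every $g \in G$ and all $x, y \in X$. Moreover $\Phi(t) \to \infty$ as $t \to \infty$: otherwise some $g$ would map $X$ into a bounded set, contradicting bijectivity on the (w.l.o.g.\ unbounded) space $X$. Given a TI-increasing $\rho$, define
$$
\tilde\rho(r) = \sup\{s \geq 0 : \Phi(s) \leq \rho(r)\}.
$$
Because $\rho$ is TI-increasing and $\Phi$ is finite and tends to infinity, $\tilde\rho$ is itself TI-increasing and, after possibly shrinking slightly at jumps of $\Phi$, satisfies $\Phi(\tilde\rho(r)) \leq \rho(r)$.

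The central step is then the inclusion $N(\pi(A), \tilde\rho) \subset \pi(N(A, \rho))$. Take $z$ with $d(z, \pi(a)) \leq \tilde\rho(d(\pi(a), \pi(x_0)))$ for some $a \in A$, fix any representative $x$ of $z$, and choose $g_1, g_2 \in G$ achieving $d(g_1 x, g_2 a) = d(z, \pi(a))$. Setting $x' = g_2^{-1} g_1 x$ one has $\pi(x') = z$, and writing $r = d(\pi(a), \pi(x_0))$ we compute
$$
d(x', a) = d(g_2^{-1} g_1 x,\, g_2^{-1} g_2 a) \leq \Phi(d(g_1 x, g_2 a)) \leq \Phi(\tilde\rho(r)) \leq \rho(r) \leq \rho(d(a, x_0)),
$$
using that the quotient metric is dominated by $d$ on $X$ and that $\rho$ is monotone. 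Therefore $x' \in N(a, \rho(d(a, x_0))) \subset N(A, \rho)$ and $z = \pi(x') \in \pi(N(A, \rho))$, as required.

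The main obstacle is not the central estimate, which is a one-line application of $\Phi$, but the bookkeeping around $\tilde\rho$: arranging that $\Phi$ is non-decreasing and tends to infinity so that $\tilde\rho$ is genuinely TI-increasing, and handling possible jumps of $\Phi$ so that $\Phi(\tilde\rho(r)) \leq \rho(r)$ actually holds. It is precisely here that the hypothesis that $G$ acts by \emph{coarse} maps --- rather than by arbitrary maps --- is essential, via the fact that each $g^{-1}$ is coarse, which is what forces $\Phi$ to be unbounded and the construction of $\tilde\rho$ to be nontrivial.
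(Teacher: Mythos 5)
Your proof is essentially correct, but it takes a genuinely different route from the paper. The paper does not verify Definition \ref{CoarselyOpen} directly: it first reduces to an isometric action (via Example 4.2 of \cite{DV}), then invokes Proposition 3.1 of \cite{KA} to identify the map induced on Higson coronas with the quotient map $\nu X \to G\setminus \nu X$ by the induced $G$-action, observes that quotients by finite group actions are open, and closes the loop with Proposition \ref{InducedOpen}. Your argument instead stays entirely on the metric side: a uniform modulus $\Phi$ for the finitely many group elements, the conjugation trick $x' = g_2^{-1}g_1 x$ to unfold a nearby class into a nearby representative, and the inverse modulus $\tilde\rho$. This is more elementary and self-contained (it needs neither \cite{KA} nor the corona machinery), and it gives explicit quantitative control; what the paper's route buys is brevity and the conceptual point that the corona functor turns the coarse quotient into an honest topological quotient.

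One point needs care. You equip $G\setminus X$ with $d(Gx,Gy)=\min_{g_1,g_2} d(g_1x,g_2y)$, whereas the paper uses the Hausdorff metric on orbits; more seriously, for a general action by coarse (non-isometric) maps neither expression need satisfy the triangle inequality, so "the standard quotient (pseudo)metric" is not well defined at that level of generality. The clean fix is exactly the paper's first sentence: replace the metric on $X$ by a coarsely equivalent one for which $G$ acts by isometries (Example 4.2 of \cite{DV}), after which the min-distance and the Hausdorff distance coincide, $d(\pi(a),\pi(x_0))\leq d(a,x_0)$ holds as you use it, and your $\Phi$ may be taken to be the identity, so $\tilde\rho=\rho$ and the bookkeeping around jumps of $\Phi$ disappears. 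Since being coarsely open is a coarse property (the Corollary following Proposition \ref{InducedOpen}), this reduction is harmless. Also note that $\Phi(t)\to\infty$ for free, since the identity element of $G$ forces $\Phi(t)\geq t$; no appeal to bijectivity is needed. With the reduction to isometries made explicit, your central estimate is correct and the proof goes through.
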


\begin{Remark}
We use notation $G\setminus X$ to denote the space of orbits $\{G\cdot x\}_{x\in X}$ equipped with the Hausdorff metric. The notation is used in \cite{KA} and   distinguishes the space of orbits from the quotient $X/G$ (which denotes the 'right' orbits) in case when $X$ is a group and $G$ is its subgroup. However, in \cite{DV} the notation $X/G$ is used (instead of $G\setminus X$) to denote the space of orbits as there is no reference to the subgroup case. 
\end{Remark}

\begin{proof}
As in Example 4.2 of \cite{DV} we may assume that $G$ acts on $X$ by isometries. According to Proposition 3.1 of \cite{KA} $G$ induces an action on $\nu X$ and the map induced by $X \to G \setminus X$ is the quotient map $\nu X \to G \setminus \nu X$, which is open. By Proposition \ref{InducedOpen}  map $X \to G \setminus X$ is coarsely open.
\end{proof}

It is easy to construct examples of coarsely open maps which are not quotients by a finite group action. Consider $\CC$ as a domain and identify $z$ with $\bar z$ for all $z$, whose real component is non-positive. The resulting quotient map is coarsely open, if we equip the quotient space with the Hausdorff metric.  
 %%%%%%%%%%%%%%%%%%%%%%%%%%%%%%%

%Note that Proposition \ref{gradint} is a special case of Lemma \ref{DivCount}.
 %%%%%%%%%%%%%%%%%%%%%%%%%%%%%%%
\section{Main Results}
The following is the main technical result of the paper. 

\begin{Theorem}\label{compactification1}
Let $X$ and $Y$ be proper discrete metric spaces. Suppose $f\colon X\to Y$ is coarse. 
Then the following are equivalent:
\begin{enumerate}
 \item $f$ is coarsely $n$-to-$1$;
 \item $\nu f$ is  $n$-to-$1$;
 \item If $\A=\{A_1, A_2, \ldots, A_k\}$ is a gradual disjoint collection of subsets of $X$ for which $f(\A)=\{f(A_1), f(A_2), \ldots, f(A_k)\}$ does not diverge, then $k \leq n$.
\end{enumerate}
\end{Theorem}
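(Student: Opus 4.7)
The plan is to establish the cyclic chain (1)$\Rightarrow$(2)$\Rightarrow$(3)$\Rightarrow$(1). Propositions \ref{graduallydisjoint} and \ref{gradint} together with Lemma \ref{lemma55} will serve as the dictionary between metric conditions on families in $X$ and $Y$ and topological disjointness/intersection properties of their coronas.

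For (1)$\Rightarrow$(2), I would argue by contradiction: if some $y \in \nu Y$ had $n+1$ distinct preimages $x_1,\dots,x_{n+1} \in \nu X$, normality of the compact Hausdorff space $hX$ would yield open neighborhoods $U_i \ni x_i$ with pairwise disjoint closures, and $A_i = U_i \cap X$ would satisfy $x_i \in \nu A_i$ with the $\nu A_i$ pairwise disjoint. Proposition \ref{graduallydisjoint} would then make $\{A_1,\dots,A_{n+1}\}$ gradually disjoint, while continuity of $hf$ places $y$ in every $\nu f(A_i)$, so Proposition \ref{gradint} says $\{f(A_1),\dots,f(A_{n+1})\}$ does not diverge. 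This gives $R>0$ and a sequence $y_j \to \infty$ inside $\bigcap_i N(f(A_i),R)$; picking $a_i^j \in A_i$ within $R$ of $y_j$ makes $\diam\{f(a_i^j)\}_i \leq 2R$. The coarsely $n$-to-$1$ hypothesis would then cover the preimage of $\{f(a_i^j)\}_i$ by $n$ sets of diameter $\leq S$; pigeonholing within each $j$ and then across $j$ extracts a fixed pair $i_1 \neq i_2$ with $d(a_{i_1}^j, a_{i_2}^j) \leq S$ for infinitely many $j$, and properness of $f$ forces $a_{i_1}^j \to \infty$, contradicting gradual disjointness of $\{A_{i_1},A_{i_2}\}$.

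The implication (2)$\Rightarrow$(3) should be immediate: for a gradually disjoint $\mathcal A = \{A_1,\dots,A_k\}$ with $f(\mathcal A)$ non-divergent, Propositions \ref{graduallydisjoint} and \ref{gradint} furnish disjoint $\nu A_i$ and some $y \in \bigcap_i \nu f(A_i)$; the standard compactness argument $hf(\overline{A_i}) = \overline{f(A_i)}$ produces an $x_i \in \nu A_i$ with $\nu f(x_i) = y$ for each $i$, and the $x_i$ are distinct, so $k \leq |\nu f^{-1}(y)| \leq n$.

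The hard direction is (3)$\Rightarrow$(1). Starting from the assumption that $f$ is not coarsely $n$-to-$1$ and fixing an $R>0$ witnessing the failure, the first key observation is that for every $k \in \NN$ a bad set $B$ (diameter $\leq R$ with $f^{-1}(B)$ not coverable by $n$ sets of diameter $\leq 2k$) can be found arbitrarily far from any fixed basepoint $y_0$: otherwise all such $B$'s lie in a bounded region of $Y$, whose preimage is bounded by properness of $f$ and hence trivially coverable. I would then inductively choose $B_j$ with parameters $k_j \to \infty$ and $d(B_j,B_{j'}) \to \infty$ for $j \neq j'$, and inside each $f^{-1}(B_j)$ extract $a_1^j,\dots,a_{n+1}^j$ pairwise more than $k_j$ apart (as long as fewer than $n+1$ such points have been chosen, the $k_j$-balls around them cover $f^{-1}(B_j)$ by at most $n$ sets of diameter $\leq 2k_j$, contradicting the defining property of $B_j$, so a further point must exist). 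Setting $A_i = \{a_i^j : j \in \NN\}$, non-divergence of $\{f(A_i)\}_i$ is witnessed by $f(a_1^j) \in \bigcap_i N(f(A_i),R)$ for all $j$. Gradual disjointness of a pair $\{A_{i_1},A_{i_2}\}$ then follows by splitting $N(A_{i_1},R') \cap N(A_{i_2},R')$ into contributions from pairs $(j,j')$: the case $j = j'$ forces $k_j \leq 2R'$, while the case $j \neq j'$ forces $d(B_j,B_{j'}) \leq S(2R')$ through the coarseness inequality $d(a,b)\leq 2R' \Rightarrow d(f(a),f(b)) \leq S(2R')$, and both bounds leave only finitely many $(j,j')$. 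Lemma \ref{lemma55} promotes pairwise to full gradual disjointness, producing $\{A_1,\dots,A_{n+1}\}$ that violates (3). The main obstacle is precisely in engineering the $B_j$ so that $k_j \to \infty$ delivers within-$j$ separation and $d(B_j,B_{j'}) \to \infty$ delivers cross-$j$ separation via coarseness, all while keeping $\{f(A_i)\}$ non-divergent; this balancing is the delicate part of the construction.
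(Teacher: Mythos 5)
Your proposal is correct and follows essentially the same route as the paper: both use Propositions \ref{graduallydisjoint} and \ref{gradint} as the dictionary, pull back disjoint closed neighborhoods of the $n+1$ preimages in $hX$ to get a gradually disjoint family with non-divergent images, and in the converse direction build $n+1$ pairwise-separated sequences with uniformly close images (your greedy ball-covering extraction inside the bad sets and the far-apart choice of the $B_j$ play exactly the role of the paper's points $x_{S,i}$ and its subsequence condition $(*)$). The only difference is organizational — you arrange the implications as a cycle $(1)\Rightarrow(2)\Rightarrow(3)\Rightarrow(1)$, effectively merging the paper's $(1)\Rightarrow(3)$ and $(3)\Rightarrow(2)$ into a direct $(1)\Rightarrow(2)$ — which changes nothing of substance.
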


\begin{proof}
 Fix basepoints $\bar x\in X, \bar y\in Y$. For $r>0$ define $C_r=N(\bar y,r)$. Given any $A\subset X$ note that $\nu f(\nu A) = \nu f(A)$, where the left hand side of the  equality represents map $\nu f$ evaluated at set $\nu A$, while the right hand side represents the boundary in the Higson corona, usually denoted by $\nu$, of the set $f(A)$. We will use this equality throughout the proof.

 {(3) $\implies$ (2):}
Assume there exist $y\in Y$ and  distinct points $x_0, x_1, \ldots, x_n\in \nu X$ with $\nu f(x_i)=y, \forall i$. Choose disjoint closed neighborhoods $A_i$ if $x_i$ in $hX$. Note that the collection $\A=\{A_i\cap X\}_{i=0,1,\ldots,n}$ is gradually disjoint by Proposition \ref{graduallydisjoint}  since 
$$
\bigcap_{i=0}^n \nu(A_i\cap X) \subset \bigcap_{i=0}^n A_i = \emptyset
$$
On the other hand 
$$
y \in \bigcap_{i=0}^n \nu f (\nu A_i) = \bigcap_{i=0}^n \nu f(A_i) $$
therefore sets $f(A_i)$ do not diverge by Proposition \ref{gradint} as $\nu f (\nu A_i)= \nu f(A_i), \forall i$, thus completing a proof by contradiction. 

 {(2) $\implies$ (3):}
Assume there exists a gradually disjoint collection $\{A_0, A_1, \ldots, A_n\}$ of subsets of $X$ so that the collection $\{f(A_0), f(A_1), \ldots, f(A_n)\}$ does not diverge. By Propositions \ref{graduallydisjoint} and \ref{gradint}  sets $\nu A_i$ are disjoint while $\nu f(\nu A_i)$ have a common intersection, thus $\f$ is not $n$-to-$1$.

 {(1) $\implies$ (3):}
  Suppose $\{A_0,A_1,\ldots, A_n\}$ is a gradually disjoint family of subsets of $X$ for which $\{f(A_0),f(A_1),\ldots, f(A_n)\}$ does not diverge. Then there exists $S>0$ such that for every $R \in \NN$ there is $x_{R,i} \in A_i\setminus C_R$ such that $diam\{f(x_{R, i}): 0\leq i\leq n\} \leq S$. Using coarseness we deduce that set $\{f(x_{R, i}): R\in \NN\}$ is unbounded for each $0\leq i\leq n$. By the gradual disjointness of the sets $A_i$ for $0\leq i\leq n$, we have that $\lim\limits_{R\to \infty}d(x_{R,i},x_{R,j}) = \infty$ for $i \neq j$. It follows that $f$ cannot satisfy the coarsely $n$-to-$1$ property for the uniformly bounded collection $\{N(f(x_{R, 1}),2S):R \in \NN\}$.

 {(3) $\implies$ (1):} Suppose $f$ is not coarsely $n$-to-$1$. Then there exists $R>0$ so that for each $S\in \NN$ there exist $x_{S,0}, x_{S,1}, \ldots, x_{S,n}$ so that:
\begin{description}
 \item [(a)]$d(x_{S,i},x_{S,j})> S,\quad \forall i \neq j$;
 \item [(b)] $d(f(x_{S,i}),f(x_{S,j}))< R,\quad \forall i, j$.
\end{description}
For $S\in \NN$ define $A^S=\{x_{S,i}\}_{i=0,1,\dots,n}, B^S=\{f(x_{S,i})\}_{i=0,1,\dots,n}$ and note that $\diam (A^S) > S, \diam (B^S) < nR$. Since $f$ is proper the collection $B^S$ is unbounded, i.e., $d(\bar y, B^S)\to \infty$  as $S \to \infty$. Therefore $d(\bar x_0, x_{r,k})\to \infty$ as $r\to\infty$ for each $k$ as $f$ is coarse. We may thus inductively pass to  a subsequence of $\NN$ so that the following condition holds:
$$
 \hspace{1.5cm}d(x_{r,i},x_{q,j})>r, \quad \forall q \leq r, \forall i,j, \textrm{ except for } (r,i)=(q,j). \hspace{2cm} (*)
$$
Define $ A_k=\{x_{r,k}\}_{r\in \NN}$ and note that $\{A_0,A_1,\ldots, A_n\}$ is gradually disjoint by  $(*)$. However, $\{f(A_0),f(A_1),\ldots, f(A_n)\}$ does not diverge due to (b).
\end{proof}%%%%%%%%%%%%%%%%%%%%Dim raising

\subsection{Dimension raising theorem for coarsely $n$-to-$1$ maps}

An important application of Theorem \ref{compactification1} is the optimized  version of the coarse dimension raising theorem. Recall the classical Hurewicz dimension raising theorem for compact Hausdorff spaces.

\begin{Theorem}[\cite{E}  Theorem 3.3.7 on page 196] \label{Hurewicz-raising}
Let $f\colon X\rightarrow Y$ be a closed surjective map between normal spaces such that $|f^{-1}(y)| \leq n+1$ for each $y\in Y$.
Then $\dim Y \leq \dim X + n $.
\end{Theorem}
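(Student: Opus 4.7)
The plan is to prove this by induction on $n$. The base case $n=0$ reduces to the statement that a closed continuous bijection between normal spaces is a homeomorphism: surjectivity combined with $|f^{-1}(y)|\le 1$ forces $f$ to be bijective, and a closed continuous bijection is automatically a homeomorphism, so $\dim Y = \dim X$.

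For the inductive step, assume the statement for all closed surjections with at most $n$-point fibers, and let $f\colon X\to Y$ be closed and surjective with $|f^{-1}(y)|\le n+1$ and $\dim X=k$. I would work through finite open covers: given any finite open cover $\{V_j\}_{j=1}^{s}$ of $Y$, pull it back via $f$, refine using $\dim X\le k$ to obtain an open cover $\mathcal{U}=\{U_i\}_{i=1}^{t}$ of $X$ of order at most $k+1$ (with each $U_i$ contained in some $f^{-1}(V_{\pi(i)})$), and then push down using closedness of $f$: for each $T\subseteq\{1,\ldots,t\}$ the set
$$W_T \;=\; Y\setminus f\Big(X\setminus\bigcup_{i\in T}U_i\Big) \;=\; \{y\in Y : f^{-1}(y)\subseteq \textstyle\bigcup_{i\in T}U_i\}$$
is open in $Y$; since each fiber has at most $n+1$ points, the family $\{W_T : |T|\le n+1\}$ covers $Y$, and intersecting each $W_T$ with $\bigcap_{i\in T}V_{\pi(i)}$ yields an open refinement of $\{V_j\}$.

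The main obstacle is verifying that this refinement has order at most $k+n+1$ rather than the weaker bound $(n+1)(k+1)$ coming from naive counting. I expect the sharp estimate to follow from a careful matching argument pairing the at most $n+1$ points of each fiber with indices in $T$, together with the order bound $k+1$ on $\mathcal{U}$ at each fiber point; alternatively, one can split $X$ into a suitable closed subset on which the fiber size is strictly smaller together with its complement, and apply the induction hypothesis via the sum theorem for covering dimension.
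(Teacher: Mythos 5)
This statement is not proved in the paper at all: it is the classical Hurewicz dimension-raising theorem, quoted verbatim from Engelking (Theorem 3.3.7), so there is no in-paper argument to compare against. Judged on its own, your proposal has a genuine gap at exactly the point where the theorem lives. The setup is standard and correct: pulling back a finite open cover of $Y$, refining to a cover $\mathcal U$ of order $\le k+1$, and pushing down via the open sets $W_T = Y\setminus f\bigl(X\setminus\bigcup_{i\in T}U_i\bigr)$, which cover $Y$ as $T$ ranges over sets of size $\le n+1$. But this naive push-down only yields a cover of order at most $(n+1)(k+1)$, i.e.\ the bound $\dim Y \le (\dim X+1)(n+1)-1$ --- which is precisely the weaker Miyata--Virk-type bound of Theorem \ref{MVRaising1} that the present paper is trying to improve upon. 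The entire content of the sharp estimate $\dim Y\le \dim X+n$ is the combinatorial refinement step, and you explicitly defer it (``I expect the sharp estimate to follow from a careful matching argument''). In Engelking this step is a separate, nontrivial lemma about the order of the family of sets indexed by the subsets $T$ that \emph{exactly} record which members of $\mathcal U$ meet a given fiber; without it, nothing beyond the $(n+1)(k+1)$ count has been established.

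Your two fallback suggestions do not close the gap either. The induction on $n$ is set up but never used: the base case $n=0$ is fine, but the inductive step as described is a direct argument that never invokes the hypothesis for $n$-point fibers. The alternative of splitting $X$ by fiber cardinality and applying the sum theorem fails in the stated generality: for a closed map between normal spaces the sets $\{y: |f^{-1}(y)|\ge i\}$ need not be closed, open, or $F_\sigma$, and the countable sum theorem is not available in the form this would require (that decomposition strategy is essentially Hurewicz's original argument for separable metric spaces and does not transfer to normal spaces). Since the theorem is a cited classical result, the honest options are to cite it, as the paper does, or to supply the full combinatorial lemma; the proposal as written does neither.
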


 \cite{MV} introduced the coarse version of the dimension raising theorem.

\begin{Theorem}[Theorem 1.4 of  \cite{MV}]\label{MVRaising}
Let $X$ and $Y$ be metric spaces. Suppose  $f\colon X\rightarrow Y$ is coarse, coarsely $n$-to-$1$  and coarse surjective. Then
$$\asdim (Y) \leq (\asdim (X) + 1)\cdot n -1.$$
\end{Theorem}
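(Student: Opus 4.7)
The plan is to deduce this bound as an immediate corollary of the strictly sharper inequality $\asdim(Y)\leq \asdim(X)+n-1$, which we obtain by combining Theorem~\ref{compactification1} with the classical Hurewicz dimension raising theorem (Theorem~\ref{Hurewicz-raising}) and the Dranishnikov--Keesling--Uspenskij identification $\dim(\nu Z)=\asdim(Z)$ for proper metric spaces. Once this sharper inequality is in hand, the stated bound follows because $\asdim(X)+n-1\leq (\asdim(X)+1)n-1$ whenever $n\geq 1$ and $\asdim(X)\geq 0$.

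First I would reduce to the case in which $X$ and $Y$ are proper discrete metric spaces. By the proposition from Section~2, each of $X$ and $Y$ is coarsely equivalent to a discrete space, and the coarse maps witnessing this equivalence compose with $f$ to give a coarse, coarsely surjective, coarsely $n$-to-$1$ map between the discrete models. Since asymptotic dimension is a coarse invariant, the conclusion for the discrete models transfers back. Properness is needed so that the Higson corona is defined; it is implicit in the setting under which the new machinery operates (and is precisely the hypothesis of the paper's main theorem).

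Next I would apply the implication (1)$\Rightarrow$(2) of Theorem~\ref{compactification1} to conclude that the continuous extension $\nu f\colon \nu X\to \nu Y$ is $n$-to-$1$. Coarse surjectivity of $f$ passes to surjectivity of $\nu f$ by the proposition on induced maps of Higson coronas, and as a continuous map between compact Hausdorff spaces $\nu f$ is automatically closed. Applying Theorem~\ref{Hurewicz-raising} to the closed surjective $n$-to-$1$ map $\nu f$ between the normal spaces $\nu X$ and $\nu Y$ yields
\[
\dim(\nu Y)\leq \dim(\nu X)+(n-1).
\]
Invoking the Dranishnikov--Keesling--Uspenskij equality $\dim(\nu Z)=\asdim(Z)$ on both sides converts this into $\asdim(Y)\leq \asdim(X)+n-1$, as required.

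The substance of the argument lies entirely in Theorem~\ref{compactification1}, which translates the coarse $n$-to-$1$ hypothesis into a topological $n$-to-$1$ statement about $\nu f$; once that is established, the deduction above is a clean application of classical covering-dimension theory. The main obstacle in this strategy is the verification that the fiber bound $|(\nu f)^{-1}(y)|\leq n$ actually holds (so that Theorem~\ref{Hurewicz-raising} applies with the right value of $n$) and that surjectivity survives passage to the corona, both of which are handled by the preceding propositions.
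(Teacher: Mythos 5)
This statement is not proved in the paper at all: it is quoted verbatim as Theorem~1.4 of \cite{MV} and used as an external ingredient (its proof in \cite{MV} is a direct combinatorial argument with uniformly bounded covers, making no use of the Higson corona). Your plan to re-derive it from the sharper bound $\asdim(Y)\leq\asdim(X)+n-1$ therefore inverts the logical order of the paper, and as written it has two genuine gaps.

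First, the argument is circular in the paper's own framework. The step ``invoke the Dranishnikov--Keesling--Uspenskij equality $\dim(\nu Z)=\asdim(Z)$ on both sides'' is not available unconditionally: Theorem~\ref{HigsonDim} asserts the equivalence of $\dim(\nu X)\leq n$ and $\asdim(X)\leq n$ only under the hypothesis $\asdim(X)<\infty$. So from $\dim(\nu Y)\leq\dim(\nu X)+n-1$ you may conclude $\asdim(Y)\leq\asdim(X)+n-1$ only if you already know $\asdim(Y)<\infty$. In the paper's proof of Theorem~\ref{Raise} this finiteness is supplied precisely by Theorem~\ref{MVRaising} --- the statement you are trying to prove. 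You would need an independent argument that a coarse, coarsely $n$-to-$1$, coarsely surjective image of a space of finite asymptotic dimension again has finite asymptotic dimension; your proposal does not provide one.

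Second, the statement is about arbitrary metric spaces $X$ and $Y$, whereas the Higson corona machinery requires properness. Your reduction step only replaces $X$ and $Y$ by \emph{discrete} coarsely equivalent models; it does not make them proper (a discrete metric space is proper only if all its balls are finite, and a general metric space --- e.g.\ an infinite-dimensional Banach space --- need not be coarsely equivalent to any proper metric space). So the corona approach cannot establish the theorem in the stated generality; the paper's own main theorem (Theorem~\ref{Raise}) is correspondingly restricted to proper spaces, while the cited result of \cite{MV} is not. If you want a proof of this particular statement, you should reproduce the cover-theoretic argument of \cite{MV} rather than route it through $\nu X$.
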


Theorem \ref{MVRaising} provided an upper bound for $\asdim (Y)$. However, the bound differed from the classical version and over the past few years there has been an ongoing debate on the possibility of its improvement. A coarse version of the classical proof seems unfeasible. The classical bound was achieved for the special case in \cite{DV1} via the Gromov boundary $\partial X$ of a proper hyperbolic geodesic space.

\begin{Theorem}[\cite{DV1}] \label{DimRaiDV}
Suppose $X$, $Y$ are proper $\delta$-hyperbolic geodesic spaces and $f\colon X \to Y$ is a radial function. If $f$ is coarsely $(n+1)$-to-1 and coarsely surjective then
$$
\dim(\partial Y)\leq \dim (\partial X)+n.
$$

Moreover, if $X$ and $Y$ are hyperbolic groups then
$$
\asdim(Y)\leq \asdim (X)+n.
$$
\end{Theorem}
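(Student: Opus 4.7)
The plan is to adapt the Higson-corona strategy of Theorem \ref{compactification1} to the Gromov boundary. For a proper $\delta$-hyperbolic geodesic space $X$, the Gromov boundary $\partial X$ is a compact metrizable space. A radial, coarse map $f$ sends geodesic rays emanating from a basepoint $x_0$ to quasigeodesic rays, and each such quasigeodesic in $Y$ converges to a unique point of $\partial Y$; this defines $\partial f\colon \partial X \to \partial Y$, continuous by the standard estimates on the Gromov product. Coarse surjectivity of $f$ yields surjectivity of $\partial f$, because every geodesic ray in $Y$ is shadowed within bounded Hausdorff distance by one starting in $f(X)$.

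Next I would prove that $\partial f$ is $(n+1)$-to-$1$. Suppose, for contradiction, that pairwise distinct boundary points $\xi_0,\ldots,\xi_{n+1}\in \partial X$ all map to a single point $\eta\in \partial Y$. Represent each $\xi_i$ by a geodesic ray from $x_0$ and let $A_i$ be a thin tubular shadow around this ray; by $\delta$-hyperbolicity two such tubes belonging to distinct boundary points stay close to one another only on a bounded neighborhood of $x_0$, so $\{A_0,\ldots,A_{n+1}\}$ is a gradually disjoint collection of $n+2$ subsets of $X$. On the other hand all the images $f(A_i)$ accumulate at $\eta$, so for every $m$ one can find $y_m\in Y$ with $d(y_m,y_0)\geq m$ and $x_{m,i}\in A_i$ with $d(f(x_{m,i}),y_m)\leq R$ for a fixed $R$. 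By gradual disjointness, $d(x_{m,i},x_{m,j})\to\infty$ for $i\neq j$, so the uniformly bounded collection $\{N(y_m,2R)\}_m$ witnesses the failure of the coarsely $(n+1)$-to-$1$ property exactly as in the implication (1)$\Rightarrow$(3) of Theorem \ref{compactification1}.

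Since $\partial X$ and $\partial Y$ are compact Hausdorff and $\partial f$ is continuous, $\partial f$ is automatically closed; the classical Hurewicz dimension raising theorem (Theorem \ref{Hurewicz-raising}) then yields $\dim(\partial Y) \leq \dim(\partial X) + n$. For the second assertion one invokes the Buyalo--Lebedeva equality $\asdim(G) = \dim(\partial G) + 1$ for hyperbolic groups, which transfers the boundary estimate to the asymptotic dimension: $\asdim(Y) = \dim(\partial Y) + 1 \leq \dim(\partial X) + n + 1 = \asdim(X) + n$.

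The main obstacle is the $(n+1)$-to-$1$ step for $\partial f$. Unlike the Higson corona, where topologically disjoint closed sets automatically correspond to gradually disjoint subsets of $X$ via Proposition \ref{graduallydisjoint}, on the Gromov boundary one must explicitly convert the topological separation of boundary points into coarse separation (gradual disjointness) of concrete subsets of $X$ that track back to those boundary points. This requires the geometry of geodesic rays and $\delta$-thin triangles in an essential way and constitutes the technical core of the argument.
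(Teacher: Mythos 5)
This theorem is imported from \cite{DV1} and the paper gives no proof of it, so there is nothing internal to compare against; your sketch follows essentially the strategy of that cited reference (induce a boundary map from the radial structure, show it is $(n+1)$-to-$1$ by playing coarse separation of rays in $X$ against non-divergence of their images in $Y$, apply the Hurewicz dimension-raising theorem to the closed map of compacta, and pass to $\asdim$ via the Buyalo--Lebedeva identity $\asdim(G)=\dim(\partial G)+1$).

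One step is stated too quickly: you infer that the sets $f(A_i)$ fail to diverge merely because they all ``accumulate at $\eta$.'' Common accumulation at a boundary point is not enough --- in $\mathbb{H}^2$ a coarsely connected set lying at slowly growing distance from a geodesic ray still converges to the same ideal point yet diverges from that ray, so two such sets can diverge from each other. What you actually need, and what the radiality hypothesis is designed to give, is that each $f(A_i)$ lies within \emph{bounded} Hausdorff distance of the geodesic ray $[y_0,\eta)$; then a common witness $y_m$ on that ray produces the points $x_{m,i}$ with $\diam\{f(x_{m,i})\}\leq R$ that contradict coarse $(n+1)$-to-$1$-ness. You do flag this as the technical core, but the justification should route through the tracking property of radial images of rays rather than through accumulation at $\eta$ alone. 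With that repair the argument is the intended one.
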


One of the issues connected to  Theorem \ref{DimRaiDV} was the fact that the relation between $\dim(\partial X)$ and $ \asdim (X)$ is well established only for hyperbolic groups. In the case of Higson compactification the following result allows us to deduce a much more general coarse version of the dimension raising theorem with the classical bound.  

\begin{Theorem}[\cite{Dr,DKU}]\label{HigsonDim}
 If $X$ is a proper metric space with  $\asdim (X) <\infty$ and $n \geq 0$, then the
following conditions are equivalent:
\begin{itemize}
 \item $\dim(\nu X) \leq n;$
 \item $\asdim (X) \leq n.$
\end{itemize}
\end{Theorem}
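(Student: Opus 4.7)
The plan is to exploit the correspondence between uniformly bounded covers of $X$ and finite open covers of the compact space $\nu X$, mediated by the algebra $C_h(X)$ of slowly oscillating functions. Since $\nu X$ is compact Hausdorff, $\dim(\nu X) \leq n$ is equivalent to the statement that every finite open cover of $\nu X$ admits a refinement of multiplicity at most $n+1$, while $\asdim(X) \leq n$ is equivalent to the existence, for every $R>0$, of a uniformly bounded cover of $X$ of $R$-multiplicity at most $n+1$. Both notions put a bound $n+1$ on multiplicity, and the theorem amounts to translating between the two scales.

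For the implication $\asdim(X) \leq n \implies \dim(\nu X) \leq n$, I would start with a finite open cover $\{V_i\}_{i=1}^k$ of $\nu X$ and use the extension property for slowly oscillating functions (cf.\ \cite[Corollary 4.9]{DM}) to construct a slowly oscillating partition of unity $\{\varphi_i\}$ on $X$ whose associated open sets $U_i = \varphi_i^{-1}((1/(k+1),1])$ cover $X$ outside a bounded set and whose traces in $\nu X$ refine $\{V_i\}$. Slow oscillation guarantees that, at sufficiently large scales, every $R$-ball meets only a controlled subcollection of the $U_i$, so I can apply $\asdim(X) \leq n$ at a well chosen scale $R$ to refine the $U_i$ by a uniformly bounded cover $\mathcal{W}$ of multiplicity at most $n+1$. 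Passing to traces in $\nu X$ then yields a refinement of $\{V_i\}$ of multiplicity at most $n+1$, establishing $\dim(\nu X) \leq n$.

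For the converse $\dim(\nu X) \leq n \implies \asdim(X) \leq n$, fix $R > 0$. Using Proposition \ref{PropBase}, cover $\nu X$ by finitely many basic open sets of the form $\nu N(A_\alpha, \rho_\alpha)$, where the TI-increasing $\rho_\alpha$ are chosen to dominate $R$ eventually. By $\dim(\nu X) \leq n$, refine this finite cover to multiplicity at most $n+1$; each refining set may again be taken of the form $\nu N(B_j, \rho_j)$. The sets $N(B_j, \rho_j)$ then give a uniformly bounded cover of $X$ outside a bounded subset, with multiplicity at most $n+1$ and with $R$-Lebesgue property beyond the bounded subset (since $\rho_j \geq R$ there). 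Augmenting with a finite cover of the remaining bounded piece produces a uniformly bounded cover witnessing $\asdim(X) \leq n$.

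The principal obstacle is in the forward direction, where one must arrange that the metric refinement $\mathcal{W}$ still refines the $U_i$ and does not inflate the multiplicity upon passage to $\nu X$. The key is to use slow oscillation to ensure that, beyond a sufficiently large bounded set, each $R$-ball is contained in a unique $U_i$, so one may replace each $W \in \mathcal{W}$ by $W \cap U_{i(W)}$ without losing either the multiplicity bound or the refinement property. A second delicate point is the verification that $\{N(B_j, \rho_j)\}$ has $R$-multiplicity at most $n+1$ outside a bounded set; this follows because the traces in $\nu X$ have multiplicity at most $n+1$ by construction, and Propositions \ref{graduallydisjoint}--\ref{gradint} translate this back to the asymptotic disjointness of the $N(B_j, \rho_j)$ at scale $R$.
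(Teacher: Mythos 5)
This theorem is not proved in the paper at all: it is quoted verbatim from Dranishnikov and Dranishnikov--Keesling--Uspenskij (\cite{Dr}, \cite{DKU}), so there is no internal proof to compare against; your attempt has to stand on its own. The forward direction of your sketch (from $\asdim(X)\leq n$ to $\dim(\nu X)\leq n$) is in the spirit of the actual DKU argument -- one does pass from uniformly bounded covers of controlled multiplicity to refinements of open covers of the corona via slowly oscillating partitions of unity -- and, modulo the slip that an $R$-ball far from the basepoint lands in \emph{at least one} (not a unique) $U_i$, it could be fleshed out into a correct proof. Note that this direction needs neither the properness in an essential way nor the hypothesis $\asdim(X)<\infty$.

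The converse direction is where your proposal breaks down, and the gap is not repairable along the lines you indicate. You take a finite open cover of $\nu X$, refine it to multiplicity at most $n+1$, and pull it back to sets $N(B_j,\rho_j)$ in $X$; you then assert that these form a \emph{uniformly bounded} cover of $X$ outside a bounded set. That cannot be true: a finite collection of subsets covering an unbounded space cannot consist of sets of bounded diameter. The corona only records scale-free, "at infinity" information -- any finite open cover of $\nu X$ corresponds to a finite cover of $X$ by typically unbounded sets -- whereas $\asdim(X)\leq n$ demands, for \emph{every} $R>0$, an infinite uniformly bounded family with $R$-multiplicity at most $n+1$. Bridging that gap is precisely the hard content of Dranishnikov's half of the theorem, and his argument is of a different nature (an inductive construction that crucially exploits $\asdim(X)<\infty$). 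The fact that your sketch never invokes the finiteness hypothesis is itself a red flag: whether $\dim(\nu X)=\asdim(X)$ holds for proper metric spaces of infinite asymptotic dimension is a well-known open problem, so no argument as soft as the one you propose can work.
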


Combining all the mentioned results we obtain the following dimension raising theorem.

\begin{Theorem}\label{Raise}
Let $X$ and $Y$ be proper metric spaces and let $f\colon X\rightarrow Y$ be coarse, coarsely $n$-to-$1$ and coarsely surjective. Then
$$\asdim (Y) \leq \asdim (X) +  n -1.$$
\end{Theorem}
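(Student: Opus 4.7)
The plan is to chain together three ingredients already in place: the main technical result Theorem \ref{compactification1}, the classical Hurewicz dimension raising theorem for normal spaces (Theorem \ref{Hurewicz-raising}), and the Dranishnikov--Keesling--Uspenskij identification $\asdim(X)=\dim(\nu X)$ (Theorem \ref{HigsonDim}). At a high level, I would push the hypotheses from $f$ to the induced boundary map $\nu f$, apply the topological Hurewicz theorem there, and then pull the conclusion back to asymptotic dimension.

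First I would reduce to the discrete case. The inequality is trivial when $\asdim(X)=\infty$, so we may assume $\asdim(X)<\infty$. The proposition that every metric space is coarsely equivalent to a discrete one, together with the fact that $\asdim$, coarseness, the coarsely $n$-to-$1$ property, and coarse surjectivity are all invariants under coarse equivalence, allows us to replace $X$, $Y$, and $f$ by coarsely equivalent discrete proper data. Moreover, Theorem \ref{MVRaising} already gives $\asdim(Y)<\infty$, so the finiteness hypothesis needed to invoke Theorem \ref{HigsonDim} is available for both $X$ and $Y$.

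Next, I would apply Theorem \ref{compactification1} to conclude that $\nu f \colon \nu X \to \nu Y$ is $n$-to-$1$, and invoke the earlier proposition on coarsely surjective maps to conclude that $\nu f$ is surjective. Since $\nu X$ and $\nu Y$ are compact Hausdorff, $\nu f$ is continuous and automatically closed, so Theorem \ref{Hurewicz-raising} applies (its cardinality bound is $n+1$ for a conclusion of $+n$, so our $n$-to-$1$ hypothesis gives the shift $+(n-1)$), yielding
\[
\dim(\nu Y) \leq \dim(\nu X) + n - 1.
\]
Translating both sides back through Theorem \ref{HigsonDim}, which is legitimate since both asymptotic dimensions are finite, produces
\[
\asdim(Y) = \dim(\nu Y) \leq \dim(\nu X) + n - 1 = \asdim(X) + n - 1,
\]
as desired. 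The genuinely hard step — controlling the cardinality of the fibers of $\nu f$ — has already been dispatched in Theorem \ref{compactification1}, so no new combinatorial or geometric work is needed here; the only thing to be careful about is the reduction to the discrete setting (so that the earlier propositions about $\nu X$ literally apply) and the verification that $\asdim(Y)<\infty$ before invoking Theorem \ref{HigsonDim} on the target.
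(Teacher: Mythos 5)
Your proposal is correct and follows essentially the same route as the paper: reduce to discrete proper spaces, dispose of the infinite-dimensional case, use Theorem \ref{MVRaising} for finiteness of $\asdim(Y)$, pass to the closed $n$-to-$1$ map $\nu f$ via Theorem \ref{compactification1}, and chain Theorems \ref{Hurewicz-raising} and \ref{HigsonDim}. Your explicit remarks on the surjectivity of $\nu f$ and on the index shift ($n$-to-$1$ versus the $n+1$ fiber bound in Theorem \ref{Hurewicz-raising}) are details the paper leaves implicit, but the argument is the same.
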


\begin{proof}
Without loss of generality we may assume $X$ and $Y$ to be discrete: if they are not we may replace them by coarsely equivalent discrete proper metric spaces spaces. The map  between such spaces, which is naturally induced  by $f$ is still coarse, coarsely $n$-to-$1$ and coarsely surjective.

If $\asdim (X)$ is not finite then the statement is trivial. Suppose $\asdim(X)$ is finite, which implies that $\asdim(Y)$ is finite as well by Theorem \ref{MVRaising}. Map $f$ induces a continuous $n$-to-$1$ map between Higson coronas by Theorem \ref{compactification1}. Since  Higson coronas are compact and Hausdorff  the induced map is closed. Using  Theorem \ref{Hurewicz-raising}  we obtain $\dim (\nu Y) \leq \dim (\nu X) +  n -1.$
Using  Theorem  \ref{HigsonDim} we obtain $\asdim (Y) \leq \asdim (X) +  n -1.$
\end{proof}

\subsection{Dimension preserving theorem for coarsely open coarsely $n$-to-$1$ maps}

In case when the induced map on the Higson corona is open we may use the same argument to obtain the equality of asymptotic dimension using the following result. 

\begin{Proposition} \label{DimEq}
  \cite[Proposition 9.2.16]{P} Let $X,Y$ be weakly paracompact, normal spaces. Let $f \colon X \to Y$ be a continuous, open surjection. If for every point $y \in Y$ the preimage $f^{-1}(y)$ is finite, then $\dim(X) = \dim(Y)$.
\end{Proposition}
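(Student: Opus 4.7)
The plan is to prove $\dim(X)=\dim(Y)$ by splitting into the two inequalities, and for each, combining openness of $f$ with normality and weak paracompactness to localize around the finite fibers. Both directions will rest on a single local separation lemma around each fiber.

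The first step I would carry out is this local lemma: for each $y\in Y$ with finite fiber $f^{-1}(y)=\{x_1,\ldots,x_k\}$, normality yields pairwise disjoint open neighborhoods $U_1,\ldots,U_k$ of the $x_i$ in $X$, and openness of $f$ makes $W:=\bigcap_i f(U_i)$ an open neighborhood of $y$ in $Y$. After possibly shrinking $W$, one would arrange $f^{-1}(W)\subset\bigcup_i U_i$, so that the ``branches'' of $f$ over $W$ are genuinely separated by the $U_i$'s. This local control is the structural input used for both inequalities.

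For $\dim(X)\leq\dim(Y)$, I would take any open cover $\mathcal U$ of $X$, use the local lemma at each $y\in Y$ to refine $\mathcal U$ by sets of the form $U_i^y\cap U$ whose images $f(U_i^y\cap U)$ are open in $Y$ (by openness of $f$); the collection of all these images covers $Y$. A refinement of this image cover in $Y$ of multiplicity at most $\dim(Y)+1$ then pulls back under $f$ to give, after intersecting with the $U_i^y$'s, a refinement of $\mathcal U$ of the same multiplicity, because $f^{-1}$ commutes with intersections and the $U_i^y$ are pairwise disjoint. For the reverse $\dim(Y)\leq\dim(X)$, I would take an open cover $\mathcal V$ of $Y$, pull back to $f^{-1}(\mathcal V)$ in $X$ (same multiplicity), refine in $X$ to multiplicity at most $\dim(X)+1$, and push the refinement forward through $f$; disjointness of the $U_i^y$ ensures that the multiplicity at each $y$ is not inflated by the fiber size $k$. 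Weak paracompactness of $X$ and $Y$ is then used in both directions to patch local refinements into global point-finite open refinements of the prescribed covers.

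The main obstacle is the local separation lemma itself, specifically arranging that $f^{-1}(W)\subset\bigcup_i U_i$ for a sufficiently small open $W\ni y$. Without compactness or properness this ``tube-lemma''-type conclusion is not automatic for open maps with finite fibers, and must be extracted from the combination of open surjectivity, pointwise finiteness of fibers, and the topological hypotheses of normality and weak paracompactness. Once this local lemma is in place, the remaining pulling-back, pushing-forward, and gluing via point-finite refinements proceed along a standard pattern, which is the approach taken in the proof recorded in reference \cite{P}.
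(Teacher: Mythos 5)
The paper offers no proof of this proposition at all: it is quoted verbatim from Pears \cite[Proposition 9.2.16]{P}, so the only fair comparison is with the argument recorded there. Your sketch hinges on a local separation lemma --- that around each $y$ with fiber $\{x_1,\dots,x_k\}$ one can shrink an open $W\ni y$ until $f^{-1}(W)\subset\bigcup_i U_i$ --- and, as you yourself suspect, this lemma is simply false under the stated hypotheses. Take $Y=\{0\}\cup\{1/n : n\in\NN\}\subset\RR$ and let $X$ be the disjoint union of $Y$ with a second, clopen, discrete copy of $\{1/n : n\in\NN\}$; let $f$ be the identity on the first copy and the inclusion on the second. Then $X$ and $Y$ are metrizable (hence normal and weakly paracompact), $f$ is a continuous open surjection with fibers of cardinality at most $2$, the fiber over $0$ is a single point $x_1$ in the first copy, yet every open $W\ni 0$ contains infinitely many $1/n$ and so $f^{-1}(W)$ meets the second copy outside any neighborhood $U_1$ of $x_1$ contained in the first copy. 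Since the lemma is the ``structural input used for both inequalities,'' the proof as proposed does not go through. There is a second, independent gap in the direction $\dim(Y)\le\dim(X)$: pushing forward a refinement of order $\dim(X)+1$ through $f$ can inflate the order at $y$ up to $k(\dim(X)+1)$, because $y\in f(W)$ for every refinement element $W$ meeting the fiber, regardless of which $U_i$ contains $W$; disjointness of the $U_i$ does not prevent this, and an uncontrolled push-forward would only recover the weaker bound of Theorem \ref{MVRaising1}, not equality.

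The repair, and the actual route in \cite{P}, is to stratify $Y$ by exact fiber cardinality rather than to separate fibers globally. The set $A_k=\{y : |f^{-1}(y)|\ge k\}$ is open (choose disjoint neighborhoods $U_1,\dots,U_k$ of $k$ points of the fiber and observe that $\bigcap_i f(U_i)\subset A_k$), so $B_k=A_k\setminus A_{k+1}$ is locally closed; and \emph{over $B_k$} your separation statement does hold: for $y\in B_k$ and $W=\bigcap_i f(U_i)\cap B_k$, every fiber over $W$ has exactly $k$ points, one in each $U_i$ and none elsewhere, so $f$ restricted to $f^{-1}(W)\cap U_i$ is an open continuous bijection onto $W$, i.e.\ a homeomorphism. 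Thus $\dim(f^{-1}(B_k))=\dim(B_k)$ for each $k$, and the two inequalities follow by assembling the countably many strata via the sum and subspace theorems for covering dimension --- this assembly, not the patching of local refinements, is where normality and weak paracompactness are genuinely used. In my counterexample above this is visible: the troublesome points $1/n$ lie in $B_2$, which is disjoint from the stratum $B_1$ containing $0$, so they never interfere with the separation carried out within a single stratum.
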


\begin{Theorem}\label{RaiseOpen}
Let $X$ and $Y$ be proper metric spaces and let $f\colon X\rightarrow Y$ be coarse, coarsely open, coarsely $n$-to-$1$ and coarsely surjective. Then
$$\asdim (Y) = \asdim (X).$$
\end{Theorem}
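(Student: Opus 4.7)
The plan is to reprise the proof of Theorem \ref{Raise} almost verbatim, replacing the classical Hurewicz dimension raising theorem (which gives only the inequality $\dim(\nu Y) \leq \dim(\nu X) + n - 1$) by Proposition \ref{DimEq}, which gives the equality $\dim(\nu X) = \dim(\nu Y)$ provided the induced map on Higson coronas is additionally open. The sole new ingredient beyond the chain used for Theorem \ref{Raise} is the translation, via Proposition \ref{InducedOpen}, of the coarsely open hypothesis on $f$ into openness of $\nu f$.

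First, I would pass to discrete representatives of $X$ and $Y$ up to coarse equivalence. Coarse openness is a coarse invariant by the Corollary immediately following Proposition \ref{InducedOpen}, so it survives this reduction together with coarseness, coarse $n$-to-$1$-ness, and coarse surjectivity. Assuming $\asdim(X) < \infty$, Theorem \ref{Raise} forces $\asdim(Y) < \infty$ as well, placing both $X$ and $Y$ within the hypotheses of Theorem \ref{HigsonDim}. I would then verify the hypotheses of Proposition \ref{DimEq} for $\nu f \colon \nu X \to \nu Y$: continuity is automatic from the extension to Higson compactifications; surjectivity comes from the earlier Proposition establishing that $\nu f$ is surjective iff $f$ is coarsely surjective; the $n$-to-$1$ property is supplied by the implication (1) $\Rightarrow$ (2) of Theorem \ref{compactification1}; and openness is supplied by Proposition \ref{InducedOpen}. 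Since $\nu X$ and $\nu Y$ are compact Hausdorff (in particular weakly paracompact and normal), Proposition \ref{DimEq} delivers $\dim(\nu X) = \dim(\nu Y)$, and Theorem \ref{HigsonDim} converts this to $\asdim(X) = \asdim(Y)$.

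The remaining case $\asdim(X) = \infty$ is the only point requiring separate care, since Theorem \ref{HigsonDim} as stated in the excerpt formally assumes finite asymptotic dimension. One option is to invoke the full Dranishnikov--Keesling--Uspenskij equality $\asdim = \dim \circ \nu$ without the finiteness proviso; alternatively, note that the corona-level argument above still produces $\dim(\nu X) = \dim(\nu Y)$, so finite $\asdim(Y)$ would force finite $\dim(\nu X)$ and, combined with Theorem \ref{Raise} applied to bounded-geometry exhaustions, contradicts $\asdim(X) = \infty$. The main, and essentially only, source of friction is this bookkeeping in the infinite-dimensional case; the substance of the theorem is the clean four-point verification of the hypotheses of Proposition \ref{DimEq}, after which both implications $\asdim(X) \leq \asdim(Y)$ and $\asdim(Y) \leq \asdim(X)$ fall out simultaneously.
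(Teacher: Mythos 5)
Your proposal matches the paper's own proof essentially line for line: discretize, use Theorem \ref{compactification1} and Proposition \ref{InducedOpen} to obtain a continuous open $n$-to-$1$ surjection of Higson coronas, apply Proposition \ref{DimEq}, and translate back via Theorem \ref{HigsonDim}. The only divergence is your extra care over the case $\asdim(X)=\infty$, which the paper itself simply declares trivial, so the two arguments are substantively identical.
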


\begin{proof}
Without loss of generality we may assume $X$ and $Y$ to be discrete: if they are not we may replace them by coarsely equivalent discrete proper metric spaces spaces. The map  between such spaces, which is naturally induced  by $f$ is still coarse, coarsely open, coarsely $n$-to-$1$ and coarsely surjective.

If $\asdim (X)$ is not finite then the statement is trivial. Suppose $\asdim(X)$ is finite, which implies that $\asdim(Y)$ is finite as well by Theorem \ref{MVRaising}. Map $f$ induces a continuous open $n$-to-$1$ map between Higson coronas by Theorem \ref{compactification1} and Proposition \ref{InducedOpen}.  Using  Theorem \ref{DimEq}  we obtain $\dim (\nu Y) = \dim (\nu X).$
Using  Theorem  \ref{HigsonDim} we obtain $\asdim (Y) = \asdim (X).$
\end{proof}

As a special case we obtain the main result of \cite{KA}.

\begin{Corollary}\cite[Theorem 1.1]{KA}
Let $X$ be a proper metric space and let $G$ be a finite group acting by isometries  on $X$. Then $G\setminus X$ has the same asymptotic dimension as $X$.
\end{Corollary}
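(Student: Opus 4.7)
The plan is to verify that the quotient map $\pi \colon X \to G \backslash X$ satisfies all four hypotheses of Theorem \ref{RaiseOpen}, and then invoke that theorem directly. Let $n = |G|$.

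First I would check coarseness, coarse $n$-to-$1$, and coarse surjectivity. Since $G$ acts by isometries, $\pi$ is $1$-Lipschitz with respect to the Hausdorff metric on $G \backslash X$, hence coarse. Surjectivity of $\pi$ is immediate, so $\pi$ is in particular coarsely surjective. For the coarsely $n$-to-$1$ property: given $R > 0$ and any subset $A \subset G \backslash X$ with $\diam(A) \leq R$, the preimage $\pi^{-1}(A)$ consists of at most $n$ orbits' worth of points, and since $G$ acts by isometries we can write $\pi^{-1}(A) = \bigcup_{g \in G} g\cdot \tilde A$ where $\tilde A$ is any lift of $A$; each $g \cdot \tilde A$ has diameter at most $R$, giving a cover by $n$ sets of diameter $\leq R$. (This is precisely the observation in Example 4.2 of \cite{DV}.)

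Next I would invoke Proposition \ref{OpenGroupAction}, which applies because the action is in particular by coarse maps, to conclude that $\pi$ is coarsely open.

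Finally, with $\pi$ shown to be coarse, coarsely open, coarsely $n$-to-$1$, and coarsely surjective, Theorem \ref{RaiseOpen} yields $\asdim(G \backslash X) = \asdim(X)$, which is the claim. There is no real obstacle here: the substantive work has been done in assembling Theorem \ref{RaiseOpen} and Proposition \ref{OpenGroupAction}, and the corollary is essentially the observation that the quotient by a finite isometric group action is a particularly clean instance of a coarsely open coarsely $n$-to-$1$ map.
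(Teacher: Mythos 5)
Your proposal is correct and follows essentially the same route as the paper: verify that the quotient map is coarsely surjective, coarse, coarsely $|G|$-to-$1$ (via Example 4.2 of \cite{DV}), and coarsely open (via Proposition \ref{OpenGroupAction}), then apply Theorem \ref{RaiseOpen}. The extra detail you supply for the coarsely $n$-to-$1$ step is a reasonable sketch of the cited argument, so there is nothing to add.
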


\begin{proof}
The natural map $X \mapsto G\setminus X$ is:
\begin{itemize}
 \item coarsely surjective as it is surjective, 
 \item coarse by the definition of the Hausdorff metric on $G \setminus X$,
 \item coarsely $|G|$-to-$1$ by \cite{DV},
 \item coarsely open by Proposition \ref{OpenGroupAction}.
\end{itemize}
The conclusion follows by Theorem \ref{RaiseOpen}.
\end{proof}

%%%%%%%%%%%%%%%%%%%%%%%%%%%%%%%%%%%%%%%%%%%%%%%%%%%%%%%%%%%%

\subsection{Dimension preserving theorem in case the induced map on the Higson corona is finite-to-$1$}\label{Open}

\begin{Theorem}\label{compactification2}
Let $X$ and $Y$ be proper discrete metric spaces. Suppose $f\colon X\to Y$ is coarse. 
Then the following are equivalent:
\begin{enumerate}
 %\item $f$ is coarsely finite-to-$1$;
 \item $\nu f$ is  finite-to-$1$;
 \item there exist no gradually disjoint collection $\A=\{A_1, A_2, \ldots\}$ of subsets of $X$ for which $f(\A)=\{f(A_1), f(A_2), \ldots, \}$ does not diverge.
\end{enumerate}
\end{Theorem}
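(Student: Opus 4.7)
The plan is to mirror the proof of Theorem \ref{compactification1}, replacing the finite-family statements (Propositions \ref{graduallydisjoint} and \ref{gradint}) by their countable-family analogues (Proposition \ref{graduallydisjointCount} and Lemma \ref{DivCount}), and reusing the identity $\nu f(\nu A) = \nu(f(A))$ already exploited in the proof of Theorem \ref{compactification1}.

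For $(2)\Rightarrow(1)$ I would argue by contrapositive. Starting from a gradually disjoint family $\A=\{A_i\}_{i\in \NN}$ whose image $f(\A)$ does not diverge, Proposition \ref{graduallydisjointCount} gives that the sets $\nu A_i$ are pairwise disjoint, and Lemma \ref{DivCount} produces a point
$$
y \in \bigcap_{i\in\NN} \nu(f(A_i)) = \bigcap_{i\in\NN} \nu f(\nu A_i).
$$
For each $i$, choosing $x_i\in\nu A_i$ with $\nu f(x_i)=y$ gives mutually distinct preimages (by pairwise disjointness of the $\nu A_i$), so $\nu f^{-1}(y)$ is infinite and $\nu f$ fails to be finite-to-$1$.

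For $(1)\Rightarrow(2)$ I would again use the contrapositive: assume $\nu f$ is not finite-to-$1$ and fix $y\in\nu Y$ with the fiber $Z:=\nu f^{-1}(y)$ infinite. The task reduces to producing distinct points $\{x_i\}_{i\in\NN}\subset Z$ together with pairwise disjoint open subsets $W_i\subset \nu X$ with $x_i\in W_i$. Given these, Proposition \ref{PropBase} provides $A_i\subset X$ with $x_i\in\nu A_i\subset W_i$; the $\nu A_i$'s are then pairwise disjoint, so Proposition \ref{graduallydisjointCount} makes $\{A_i\}$ gradually disjoint, and $y=\nu f(x_i)\in\nu f(\nu A_i)=\nu(f(A_i))$ for every $i$ forces $\{f(A_i)\}$ to be non-divergent by Lemma \ref{DivCount}.

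The hard part is this extraction step. It reduces to the elementary topological fact that every infinite compact Hausdorff space contains countably many pairwise disjoint nonempty open subsets, applied to $Z$, after which one uses normality of $\nu X$ to lift a shrunken family of relatively closed pieces to pairwise disjoint opens. I would establish the topological fact by a Cantor--Bendixson split $Z = Z_P\cup Z_S$: if the scattered part $Z_S$ is infinite, it contains infinitely many points isolated in $Z$, whose separating relative neighborhoods can be refined (via normality of $\nu X$) to a pairwise disjoint family; if instead the perfect part $Z_P$ is nonempty, an inductive binary splitting inside $Z_P$ (using only Hausdorffness at each step to produce two disjoint nonempty relative opens) yields infinitely many pairwise disjoint relatively clopen pieces, each supplying a point $x_i$ that extends to a suitable $W_i\subset \nu X$. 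Either case delivers the required sequences and completes the proof.
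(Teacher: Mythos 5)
Your argument is correct in substance and follows essentially the same route as the paper's proof: both implications are handled by contraposition, using the countable-family statements (Proposition \ref{graduallydisjointCount} and Lemma \ref{DivCount}) together with the identity $\nu f(\nu A)=\nu f(A)$ from the proof of Theorem \ref{compactification1}. (Your labels are swapped: the first paragraph, which starts from a gradually disjoint family with non-divergent image, is the contrapositive of $(1)\Rightarrow(2)$, and the second paragraph is the contrapositive of $(2)\Rightarrow(1)$; since both directions are covered, nothing is lost.) The one place you genuinely diverge from the paper is the extraction step: the paper simply picks a countably infinite subset $Q$ of the fiber that is discrete as a subspace and takes pairwise disjoint closed neighborhoods of its points in $hX$, whereas your Cantor--Bendixson decomposition is a heavier route to the same configuration. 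Two cautions. First, ``use normality of $\nu X$ to lift a shrunken family of relatively closed pieces to pairwise disjoint opens'' is not a valid step for an \emph{infinite} family: normality separates two disjoint closed sets, but an infinite pairwise disjoint family of closed sets in a compact Hausdorff space need not admit a pairwise disjoint open expansion (consider the singletons $\{0\},\{1\},\{1/2\},\{1/3\},\ldots$ in $[0,1]$). What saves you is that your points $x_i$ lie in pairwise disjoint relatively open subsets of the fiber, hence form a countable discrete subspace of the regular space $\nu X$; choosing $V_i\ni x_i$ open with $\overline{V_i}$ meeting $\{x_j\}_j$ only in $x_i$ and setting $W_i=V_i\setminus\bigcup_{m<i}\overline{V_m}$ produces the required pairwise disjoint opens directly --- this is also the argument that justifies the paper's terse ``disjoint closed neighborhoods'' step. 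Second, the sets $N(A_i,\rho_i)$ supplied by Proposition \ref{PropBase} inside the $W_i$ need not be literally disjoint in $X$ (only their coronas are), while gradual disjointness is defined for disjoint collections; discard a bounded set from each, or, as in the paper, take the neighborhoods disjoint already in $hX$ so that their traces on $X$ are disjoint.
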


\begin{proof}
$(1)\Rightarrow (2)$ Suppose that there exists a gradually disjoint collection $\A=\{A_1, A_2, \ldots\}$ of subsets of $X$ for which $f(\A)=\{f(A_1), f(A_2), \ldots, \}$ does not diverge. By Propositions \ref{graduallydisjointCount} and  \ref{DivCount} the induced map $\nu f$ could not be finite-to-$1$.

$(2)\Rightarrow (1)$  Suppose that there exists $y\in \nu Y$ which has countably many preimages, say $\nu f^{-1}(y) \supseteq Q=\{x_1,x_2,x_3, \hdots \}$. Without  loss of generality (by potentially taking an appropriate infinite subset) we may assume that $Q\subset \nu X$ is a discrete subspace. Let $\{V_1, V_2, \ldots \}$ denote a collection of disjoint closed neighborhoods of points $x_1, x_2, \ldots $ respectively, taken in $h(X)$. By Propositions \ref{graduallydisjointCount} and  \ref{DivCount}  the family $\{V_i \cap X\}_{i\ge 1}$ is gradually disjoint while $\{f(V_i \cap X)\}_{i\ge 1}$ is not divergent. 
\end{proof}

\begin{Theorem}\label{RaiseCountable}
Let $X$ and $Y$ be proper metric spaces of finite asymptotic dimension and let $f\colon X\rightarrow Y$ be coarse, coarsely open, coarsely surjective, and satisfies the property that there exist no divergent disjoint collection $\A=\{A_1, A_2, \ldots\}$ of subsets of $X$ for which $f(\A)=\{f(A_1), f(A_2), \ldots, \}$ does not diverge  then
$$\asdim (Y) = \asdim (X).$$
\end{Theorem}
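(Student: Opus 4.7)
The plan is to mirror the proof of Theorem \ref{RaiseOpen}, substituting the countable finite-to-$1$ characterization of Theorem \ref{compactification2} for the uniform $n$-to-$1$ characterization of Theorem \ref{compactification1}. The statement is essentially a relaxation of Theorem \ref{RaiseOpen} in which the uniform bound on preimage cardinality is weakened to mere finiteness, so no fundamentally new machinery should be required; everything is assembled from results proved earlier in the paper.

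First, I would reduce to the case when both $X$ and $Y$ are discrete by passing to coarsely equivalent discrete proper metric spaces. All four hypotheses are coarse invariants: being coarse, coarsely open, and coarsely surjective are manifestly coarse, while the non-existence of a gradually disjoint family $\A$ in $X$ for which $f(\A)$ fails to diverge is a coarse invariant because both gradual disjointness and divergence are phrased purely in terms of $R$-neighborhoods and boundedness; equivalently, by Theorem \ref{compactification2} this condition is nothing but the topological statement that $\nu f$ is finite-to-$1$, and coarse equivalences induce homeomorphisms of Higson coronas compatible with $\nu f$.

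Second, I would read off the four hypotheses as topological properties of the induced map $\nu f\colon \nu X\to \nu Y$. Coarseness of $f$ makes $\nu f$ a well-defined continuous map; coarse surjectivity makes $\nu f$ surjective, by the extension proposition at the end of Section 3; coarse openness makes $\nu f$ open, by Proposition \ref{InducedOpen}; and the absence-of-bad-family hypothesis makes $\nu f$ finite-to-$1$, by Theorem \ref{compactification2}. Since Higson coronas are compact Hausdorff, hence weakly paracompact and normal, Proposition \ref{DimEq} then gives $\dim(\nu X)=\dim(\nu Y)$. Finally, since $\asdim X$ and $\asdim Y$ are finite by assumption, Theorem \ref{HigsonDim} transports this into $\asdim X=\asdim Y$.

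The step I expect to require the most care is the first one, namely verifying that the somewhat unwieldy absence-of-bad-family hypothesis genuinely persists after replacing $X$ and $Y$ by discrete models. The cleanest way around this is to not verify it directly at all, but to invoke Theorem \ref{compactification2} to reinterpret the hypothesis as ``$\nu f$ is finite-to-$1$'', and then use that a coarse equivalence induces a homeomorphism of Higson coronas. Once that reinterpretation is in place, the remainder of the argument is a purely formal composition of the cited theorems.
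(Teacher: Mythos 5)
Your proposal is correct and follows essentially the same route as the paper's own proof: invoke Theorem \ref{compactification2} to see that $\nu f$ is finite-to-$1$, apply Proposition \ref{DimEq} to the induced continuous open surjection of coronas, and transfer back via Theorem \ref{HigsonDim}. If anything, your write-up is more careful than the paper's rather terse argument, since you explicitly record the reduction to discrete spaces and the openness and surjectivity of $\nu f$ needed to apply Proposition \ref{DimEq}.
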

\begin{proof}
By Theorem \ref{compactification2}, we know that $\nu f$ is a finite-to-$1$ map. By Theorem \ref{DimEq}, we have $dim(\nu X) = dim(\nu Y)$. Because $asdim(X),asdim(Y) < \infty$, Theorem \ref{HigsonDim} gives that $asdim(X) = dim(\nu X) = dim(\nu Y) = asdim(Y)$.
\end{proof}

%%%%%%%%%%%%%%%%%%%%%%%%%%%%%%%%

There exists a concept of coarsely finite-to-$1$ maps. These were defined in \cite{AV} and have been shown to have many dual properties to finite-to-$1$ maps in topology. The following is the definition of coarsely finite-to-$1$ maps as defined in that paper.

\begin{Definition}
A map $f\colon X\to Y$ of metric spaces is \textbf{coarsely finite-to-$1$} if for every $R>0$, there exists $S>0$ and $m$ such that the preimage of each subset of $Y$ with diameter $R$ can be covered by at most $m$ subsets of $X$ with diameter at most $S$. 
\end{Definition}

The following example demonstrates that Theorem \ref{RaiseOpen} cannot be generalized to coarsely finite-to-$1$ maps as defined above as the induced map on the Higson corona is not finite-to-$1$ and it also raises the dimension.

\begin{Example}
Define 
$$
A_k= [k,\infty) \times \{k\}, \qquad B_k= \{k\}\times [k-1,k].
$$ 
Note that $X=\bigcup _{k=1}^\infty (A_k \cup B_{k+1})$ is a connected infinite tree. Let $d$ denote the path metric on it and let $d_e$ denote the induced Euclidean metric. Note that $\asdim (X,d)=1$ (since $(X,d)$ is a length tree) and $\asdim(X,d_e)=2$ as it is coarsely equivalent to one eighth of the plane $\{(x,y)\in \RR^2 \mid x,y\geq 0, x\geq y\}$. Consider the identity map $f\colon (X,d)\to (X,d_e)$. It has the following properties:
\begin{itemize}
 \item $f$ is coarse as it is contraction;
 \item $f$ is coarsely finite-to-$1$ by the following argument: choose $D\in \NN$ and let $W\subset (X,d_e)$ be a set of diameter at most $D$. Then $f^{-1}(W)$ intersects at most $(D+1)$-many intervals $A_k \cup B_{k+1}$ (which form a partition of $X$) and the intersection with each of these sets is contained in some interval of length at most $D+1$. 
 \item $\nu f$ is not finite-to-$1$ by Theorem \ref{compactification2} applied to collection $\{A_1, A_2, \ldots\}$.
\end{itemize}

However, $f$ is not coarsely open which suggests the following question.
\end{Example}

\begin{Question}
Suppose $X$ and $Y$ are proper metric spaces of finite asymptotic dimension and $f\colon X\rightarrow Y$ is coarse, coarsely open, coarsely surjective, and coarsely finite-to-$1$. Can we conclude that $asdim(X) = asdim(Y)$?
\end{Question}

\section*{Acknowledgments} The authors would like to thank the anonymous referee. Their comments/corrections greatly improved the quality of this paper. 

The authors would also like to sincerely thank Nicol\` o Zava and Damian Sawicki, each of whom independently discovered an error in the published version and gracefully informed us about it. The converse of Proposition \ref{InducedOpen} stated in this corrected version does not hold. The mentioned proposition has been stated as equivalence in the published version and accompanied by Corollary 4.16. In this version only the valid direction of the published Proposition \ref{InducedOpen} has been presented while the published Corollary 4.16 has been removed. As the removed statements only attempted to conveyed a side result, the rest of the paper (including the main argument) is not affected. 

%%%%%%%%%%%%%%%%%%%%%%%%%%%%


\begin{thebibliography}{99}

\bibitem{A}
K. Austin,
\emph{A Note On Large Scale Surjections},
arXiv:1508.02996

\bibitem{ADH}
K. Austin, J. Dydak, and M. Holloway,
\emph{Connection Between Scales},
work in progress.

\bibitem{AV}
K. Austin and \v Z. Virk,
\emph{Coarse Metric Approximation}, 
Topology and its Applications 202(2016), 194--204.

\bibitem{BD}
G. Bell and A. Dranishnikov, 
\emph{A Hurewicz-type theorem for asymptotic dimension
and applications to geometric group theory}, 
 Trans. Amer. Math. Soc. 358 (2006), 4749--4764 .

\bibitem{BDLM}
N. Brodskiy, J. Dydak, M. Levin, and A. Mitra, 
\emph{A Hurewicz theorem for the Assouad-Nagata dimension}, 
J. London Math. Soc. (2008) 77 (3), 741--756.

\bibitem{CDV1}
M. Cencelj, J. Dydak, and A. Vavpeti\v c,
\emph{Coarse Amenability Versus Paracompactness},
Journal of Topology and Analysis, 1-28 (2013)

\bibitem{CDV2}
M. Cencelj, J. Dydak, and A. Vavpeti\v c,
\emph{Large Scale Versus Small Scale},
Recent Progress in General Topology III, to appear.

\bibitem{CDVV}
 M. Cencelj, J. Dydak, A. Vavpeti\v c, and \v Z. Virk, \emph{A combinatorial approach to coarse geometry}, Topol. Appl. 159 (2012), no. 3 646--658.

\bibitem{Dr} A. Dranishnikov, \emph{Asymptotic Topology}, Russian Math. Surveys 55:6 (2000), 1085--1129.

\bibitem{DKU}
A. Dranishnikov, J. Keesling, and V. Uspenskij
\emph{On the Higson Corona of Uniformly Contractible Spaces},
Topology Vol. 37, No. 4, 791--803, 1998.

%\bibitem{DW}
%C.F. Dunkl and K.S. Williams,
%\emph{A Simple Norm Inequality},
%The American Mathematical Monthly
%Vol. 71, No. 1 (1964), pp. 53--54.


\bibitem{DyCAD}
J. Dydak 
\emph{Coarse Amenability and Discreteness},
arXiv preprint arXiv:1307.3943 

\bibitem{DH}
J. Dydak, C.S. Hoffland
\emph{An Alternative Definition of Coarse Structures},
Topology and its Applications 155 (9), 1013--1021.

\bibitem{DM}
J. Dydak, A. Mitra
\emph{Large Scale Absolute Extensors},
arXiv preprint, arXiv:1304.5987


\bibitem{DV}
J. Dydak and \v Z. Virk,
\emph{Preservnig coarse properties},
Revista Matemática Complutense 29(2016), 191--206.

\bibitem{DV1}
J. Dydak and \v Z. Virk,
\emph{Inducing maps between Gromov boundaries},
accepted for publication in Mediterranean Journal of Mathematics.

\bibitem{DW}
J. Dydak and T. Weighill, 
\emph{Monotone-light factorizations in coarse geometry}, 
arXiv:1607.02403.

\bibitem{E}
R. Engelking,
\emph{Theory of Dimensions Finite and Infinite},
Sigma Series in Pure Mathematics, Vol. 10, Helderman Verlag 1995.

\bibitem{GJ}
L. Gillman and M. Jerison,
\emph{ Rings of Continuous Functions},
 Van Nostrand, 1960. 

\bibitem{Hu}
S.T. Hu
\emph{Theory of Retracts}
Wayne State University Press, Detroit 1965 234 pp. 

\bibitem{KA}
D. Kasprowski, 
\emph{The asymptotic dimension of quotients by finite
groups}, 	arXiv:1605.06453.


\bibitem{K}
J. Keesling
\emph{On the One Dimensional \v Cech Cohomology of The Higson Compactification and Corona},
Topology Proceedings, Vol. 19, 1994.


\bibitem{Bar}
B. MacCluer,
\emph{Elementary Functional Analysis},
1st ed. Vol. 253. New York: Springer-Verlag, 2009. Print. Graduate Texts in Mathematics.

\bibitem{MY}
K. Mine and A. Yamashita, 
\emph{Metric compactifications and coarse structures}. 
Can. J. Math. 67, No. 5, 1091--1108 (2015).

\bibitem{MV}
 T. Miyata and \v Z. Virk ,
{\em  Dimension Raising Maps in a Large Scale},  
Fundamenta Mathematicae Vol. 223 (2013), 83--98.

\bibitem{P} 
A.R. Pears, 
\emph{Dimension theory of general spaces}, 
Cambridge University Press., 1975.


\bibitem{R}
J. Roe,
{\em Lectures in Coarse Geometry}, 
 University Lecture Series, 31. American Mathematical Society, Providence, RI, 2003. 

\bibitem{Yu}
G. Yu,
\emph{The Novikov Conjecture for groups with finite asymptotic dimension},
Ann. Math. 147, 2 (1998), 325--355.

\end{thebibliography}
\end{document}